\def\cL{\mathcal{L}}
\def\cL{\mathcal{L}}
\DeclareMathOperator{\supp}{supp}
\renewcommand{\ge}{\geqslant}
\renewcommand{\le}{\leqslant}
\numberwithin{equation}{section}
\newtheorem{theorem}{Theorem}
\newtheorem*{corollary}{Corollary}
\newtheorem*{remark}{Remark}
\begin{document}

\title{\bf\large\MakeUppercase{
Non-local convolution type operators with potential: essential and infinite discrete spectrum
}}

\author{D.I. Borisov$^{1,2}$, A.L. Piatnitskii$^{3,4}$, E.A. Zhizhina$^{4,3}$}
\date{}

\maketitle
{\small
    \begin{quote}
1) Institute of Mathematics, Ufa Federal Research Center, Russian Academy of Sciences,  Chernyshevsky str. 112, Ufa, Russia, 450008
\\
2) University of Hradec Kr\'alov\'e
62, Rokitansk\'eho, Hradec Kr\'alov\'e 50003, Czech Republic
\\
3) The Arctic University of Norway, campus Narvik, PO Box 385, Narvik 8505, Norway
\\
4) Institute for Information Transmission Problem (Kharkevich Institute), Bolshoy Karetny per. 19, build.1, Moscow, Russia, 127051
\\[2mm]
Emails: borisovdi@yandex.ru,  apiatnitski@gmail.com, ejj@iitp.ru
\end{quote}}




\begin{abstract}
  The goal of this note is to study the spectrum of a self-adjoint convolution operator
  in $L^2(\mathbb R^d)$ with an integrable kernel that is perturbed by
  an essentially bounded real-valued potential tending to zero at infinity.
We show that the essential spectrum of such operator is the union of the spectrum of
the convolution operator and of the essential range of the potential.
Then we provide several sufficient conditions for the existence of a countable sequence
of discrete eigenvalues.
For operators having non-connected essential spectrum we give sufficient conditions for the existence of discrete
eigenvalues in the corresponding spectral gaps.
\end{abstract}

\section{Introduction}\label{s_intr}
In this paper
we study the spectrum of self-adjoint non-local convolution type operators with a potential of the form
\begin{equation}\label{L}
(\cL u)(x):=\int\limits_{\mathds{R}^d} a(x-y) u(y)\,dy + V(x)u(x)\quad \text{in}\quad L_2(\mathds{R}^d).
\end{equation}
Our goal is to determine the location of the essential spectrum of operator $\cL$ and to provide conditions on the functions $a$ and $V$  ensuring the existence of countably many
points of the discrete spectrum.

In our previous work \cite{BPZ-JMAA} we studied  a similar spectral problem 
under the assumption that $V$ is the Fourier image of some function $\hat{V} \in L_1(\mathds{R}^d)$.
Here we drop this conditions on the potential $V$ and only assume that $V$ is a $L_\infty$ function that tends to zero at infinity. For such a potential the essential spectrum of  $\cL$ is getting more complicated, in particular, spectral gaps can appear. Our first result describes explicitly the essential component of the spectrum. 

Then we provide simple sufficient conditions guaranteeing the existence of infinitely many discrete eigenvalues below or above the essential spectrum.
We stress that  in \cite{BPZ-JMAA} the conditions ensuring such a structure of the discrete spectrum have been formulated in terms of the behaviour of higher
order Taylor or  Fourier coefficients of $a(\cdot)$ and $V$ localized in the vicinity of  their extreme points. 
This imposed in particular quite restrictive regularity assumptions on the functions $a(\cdot)$ and $V$.  Moreover, 
these conditions are rather implicit, it is difficult to check if they hold true.
In the present work sufficient conditions for the existence of infinitely many points of the discrete spectrum are given
in terms of agreed lower bounds on $V$ at infinity and on the Fourier image of $ a(\cdot)$ in the vicinity of its maximum point, no regularity of
these functions is required.

We also show that there can be additional eigenvalues in the spectral gaps and provide sufficient conditions ensuring the existence of such eigenvalues.

In recent years, there has been an increasing interest of mathematicians in non-local operators of convolutional type with an integrable kernel.
Such interest is motivated by the fact that such operators possess many interesting nontrivial  properties, which are not exhibited by classical differential operators. One more reason is
the presence of non-trivial qualitative and asymptotic problems
in this theory, for instance, 
various non-local homogenization problems, local estimates for the fundamental solution
and the Green function, large time asymptotics of the fundamental solutions, spectral problems, etc.

In applications, non-local convolution type  operators appear in such fields as population dynamics, porous media, image processing, see \cite{Rossi_book}, \cite{FKK} and \cite{KPZ} for further details. One of the mathematical tools widely used in  population dynamics is the so-called contact processes in continuum, see e.g.
\cite{KKP}, \cite{PZ}.   These processes are a particular case of  continuous time birth and death processes with infinite particle configurations in continuum. The function $a(\cdot)$ is called the dispersal kernel and it defines the distribution of a position of a newly born particle in the configuration. The mortality rate determines the intensity of  death,  in heterogeneous environments  it depends on a position in the space. This leads to the appearance of a non-constant potential $V(\cdot)$. One way of describing the evolution of stochastic infinite-particle configurations in continuum is based on studying a hierarchical system of evolution equations for the corresponding correlation functions. The equation for the first correlation function is decoupled and it reads as
$$
\partial_t u(x,t)=\cL u(x,t)\quad \text{in}\quad \mathbb R^d\times(0,+\infty),\qquad u(x,0)=u_0(x).
$$
Since the first correlation function represents the density of population, the large time behaviour of the population is characterized  by the spectrum of $\cL$. In particular, once the operator  $\cL$ has points of discrete spectrum above the top of the essential spectrum, the population shows an exponential growth, see \cite{KPMZh} and \cite{KMV} for further
discussion on this subject.

In some applications the description of various processes based on non-local convolution type operators is more accurate
than the description based on  differential equations. The latter provides an approximation that is suitable
for characterizing the macroscopic behaviour of the studied models and for obtaining the large time asymptotics of the corresponding evolution processes. This is in a good accordance with the recent rigorous homogenization results for convolution type operators  stating that, both in periodic and random stationary media, the effective
operator is a second order elliptic differential operator, see  \cite{BrPi21}, \cite{PiZh17}.

Since under the diffusive scaling convolution operators approximate  differential operators, it is natural
to compare the spectral properties of  non-local 
operators 
\eqref{L} and those of the classical    Schr\"odinger operators. It should be emphasized that, unlike the  Schr\"odinger operator,  the operator of multiplication by $V$ in \eqref{L} is not relatively compact with respect to the convolution operator. 
Therefore, the essential spectrum of $\cL$ need not coincide
with the essential spectrum of the convolution operator. And this is indeed the case: we show that the essential spectrum of $\cL$ coincides with the union of the essential spectrum of the multiplication by $V$ and that of the convolution operator, see Theorem~\ref{T1}.

It is well-known that localized perturbations of classical differential operators can create discrete eigenvalues below the essential spectrum. Since very first classical works  \cite{Simon2}, \cite{Klaus}, \cite{Klaus2}, \cite{Simon1}, such phenomenon was discovered and studied for plenty of models in hundreds of works and being not able to cite all of them, we just cite a recent book \cite{ExnKov}, in which a nice survey of the current state-of-art was provided. We also mention that a perturbation of the bottom of the essential spectrum for a differential operator can violate the preservation of total multiplicity and there are many mechanisms for such phenomenon, see \cite{GH1987}, \cite{BD1}, \cite{BD2}, 
\cite{BD4}, 
\cite{BD6}, \cite{BD7}. 
 It is then known that, in the case of perturbation of differential operators,  a weakly decaying potential can create infinitely many eigenvalues emerging from the bottom of the essential spectrum, see for instance \cite[Sect. X\!I\!I\!I.3]{RS4}. In dimension three an appropriate decay of the perturbing potential for such phenomenon is $|x|^{-2}$. In the non-local case that we study the potential and the convolution operator are both bounded and equipollent from this point of view. This is why in the considered case a much more wider class of localized perturbation can produce discrete eigenvalues bifurcating from the essential spectrum including the case of infinitely many such eigenvalues.
In the present work we focus on  sufficient conditions ensuring the existence of infinitely many eigenvalues that are formulated in terms of simple lower bounds
for  the potential $V$ at infinity and for the function $\hat a$ in the vicinity of its maximum point, see
Theorems \ref{T2}-\ref{thm_heavy_tail} in Section \ref{ss_disc_spe}.
It is worth noting that in the case of a non-negative  $a(\cdot)$  being a probability density the conditions of existence of an infinite discrete spectrum of  $\cL$ can be formulated in terms of the asymptotic behaviour of  $a(\cdot)$ and $V(\cdot)$ at infinity.
Then, in Section \ref{ss_gap},
we provide simple sufficient conditions for the presence of points of the discrete spectrum in spectral gaps.

\section{Main assumptions} 

Now we formulate general conditions on functions $V=V(x)$ and $a=a(x)$. We assume that
\begin{equation}\label{prop_aa}
a \in L_1(\mathds{R}^d), \qquad  a(-x)=\overline{a(x)}.
\end{equation}
Concerning $V$ we suppose that
\begin{itemize}
  \item $V$ is a real-valued function,
  \item  $V$ is an element of  $L_\infty(\mathbb R^d), $
\begin{equation}\label{ess_ogr}
-\infty<V_{\rm min}:=\mathop{\mathrm{essinf}}\limits_{x\in\mathbb R^d} V(x), \qquad
\mathop{\mathrm{esssup}}\limits_{x\in\mathbb R^d} V(x) =: V_{\max}<+\infty,
\end{equation}
  \item $V(x)$ tends to zero as $|x|\to 0$: for any $\delta>0$ there exists $N_\delta$ such that
\begin{equation}\label{vani_infi}
\mathop{\mathrm{esssup}}\limits_{|x|\geqslant N_\delta} |V(x)|\leqslant \delta.
\end{equation}
\end{itemize}
Due to \eqref{prop_aa}  the Fourier image $\hat{a}$ of the function $a$ is a continuous real-valued function with $\hat{a}(\lambda) \to 0$, as $|\lambda| \to \infty$, i.e. $\hat{ a} \in C_0(\mathds{R}^d)$.


Denote
$$a_{\rm min}:=\inf\limits_{\mathds{R}^d} \hat{a}, \quad a_{\max}:=\sup\limits_{\mathds{R}^d} \hat{a}.
$$
Then
\begin{equation}\label{aV0}
a_{\rm min}\leqslant 0\leqslant a_{\max}, \quad V_{\rm min}\leqslant 0\leqslant V_{\max}.
\end{equation}
In what follows  we also use the notation $\mu_0=\min\{a_{\rm min},V_{\rm min}\}$ and $\mu_1=\max\{a_{\max},V_{\max}\}$.

\section{Essential spectrum}
The spectrum of the operator of multiplication by $V$ in $L_2(\mathbb R^d)$ coincides with its essential spectrum
and is equal to the essential range of $V$. We denote the essential range of $V$ by $\mathcal{S}_V$.
The following theorem describe the essential spectrum of the operator $\cL$.

\begin{theorem}\label{T1}
Under the above formulated conditions on the functions $a$ and $V$ the essential spectrum of the operator
$\cL$ is the union
$$
\sigma_{\rm ess}(\cL)=[a_{\rm min},a_{\max}]\cup \mathcal{S}_V.
$$
The discrete spectrum of $\cL$  can be located only in the set
$$[a_{\rm min}+V_{\rm min}, a_{\max}+V_{\max}]\setminus \big([a_{\rm min},a_{\max}]\cup \mathcal{S}_V\big).$$
It can accumulate only to the boundary of $\big([a_{\rm min},a_{\max}]\cup \mathcal{S}_V\big)$.
\end{theorem}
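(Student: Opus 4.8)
The plan is to realize $\cL=\cA+V$ as a bounded self-adjoint operator, where $(\cA u)(x):=\int_{\mathbb R^d}a(x-y)u(y)\di y$ is the convolution (self-adjoint since $a(-x)=\overline{a(x)}$ makes $\hat a$ real) and $V$ is the bounded real multiplication operator, and to compute $\essspec(\cL)$ by a pair of inclusions. I would first record two auxiliary facts. Passing to the Fourier side, $\cA$ is unitarily equivalent to multiplication by the real continuous function $\hat a\in C_0(\mathbb R^d)$, whose range is a connected set containing $0$, so $\spec(\cA)=\essspec(\cA)=[a_{\rm min},a_{\max}]$. Secondly, testing the quadratic form gives $\langle\cL u,u\rangle=\int_{\mathbb R^d}\hat a\,|\hat u|^2\di\xi+\int_{\mathbb R^d}V\,|u|^2\di x\in[a_{\rm min}+V_{\rm min},\,a_{\max}+V_{\max}]\|u\|^2$, whence $\spec(\cL)\subseteq[a_{\rm min}+V_{\rm min},a_{\max}+V_{\max}]$. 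Once $\essspec(\cL)=[a_{\rm min},a_{\max}]\cup\cS_V=:\Si$ is established, the remaining assertions follow at once: $\discspec(\cL)=\spec(\cL)\setminus\Si$ lies in $[a_{\rm min}+V_{\rm min},a_{\max}+V_{\max}]\setminus\Si$, and any accumulation point of $\discspec(\cL)$ belongs to $\essspec(\cL)=\Si$ while being a limit of points of its complement, hence lies in $\partial\Si$.

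For the inclusion $\Si\subseteq\essspec(\cL)$ I would build singular Weyl sequences separately for the two pieces. For $\lambda\in[a_{\rm min},a_{\max}]=\essspec(\cA)$, take a Weyl sequence for $\cA$, truncate it to compact support (using boundedness of $\cA$), and translate it to spatial infinity; since $\cA$ commutes with translations while $\|Vw(\cdot-y)\|\to0$ as $|y|\to\infty$ by \eqref{vani_infi}, a diagonal choice produces a normalized sequence $v_n\rightharpoonup0$ with $\|(\cL-\lambda)v_n\|\to0$. For $\lambda\in\cS_V$, fix $\e>0$, choose a normalized $\phi$ supported where $|V-\lambda|<\e$, and set $\psi_n(x)=\E^{\iu\xi_n\cdot x}\phi(x)$ with $|\xi_n|\to\infty$; then $\|(V-\lambda)\psi_n\|\le\e$, while $\|\cA\psi_n\|^2=\int_{\mathbb R^d}|\hat a(\eta+\xi_n)|^2|\hat\phi(\eta)|^2\di\eta\to0$ by dominated convergence since $\hat a\in C_0$. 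Letting $\e\to0$ together with $|\xi_n|\to\infty$ yields the desired singular sequence, so $\lambda\in\essspec(\cL)$.

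The hard part is the reverse inclusion $\essspec(\cL)\subseteq\Si$, precisely because $V$ is not relatively compact with respect to $\cA$, so Weyl's theorem does not apply. The plan is to show directly that $\cL-\lambda$ is Fredholm for every real $\lambda\notin\Si$. For such $\lambda$ one has $\lambda\notin[a_{\rm min},a_{\max}]=\spec(\cA)$, so $R:=(\cA-\lambda)^{-1}$ is bounded, equal to the Fourier multiplier with symbol $g(\xi)=(\hat a(\xi)-\lambda)^{-1}$; moreover $\lambda\ne0$ because $0\in[a_{\rm min},a_{\max}]\subseteq\Si$. The key observation is that $g(\xi)\to-\lambda^{-1}$ as $|\xi|\to\infty$, so $g_0:=g+\lambda^{-1}\in C_0(\mathbb R^d)$ and $R=-\lambda^{-1}I+g_0(D)$. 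Writing $\cL-\lambda=(\cA-\lambda)(I+RV)$, I would then expand $I+RV=(I-\lambda^{-1}V)+g_0(D)V$. Here $I-\lambda^{-1}V$ is multiplication by $(\lambda-V)/\lambda$, which is boundedly invertible because $\lambda\notin\cS_V$; and $g_0(D)V$ is compact, being an operator-norm limit of Hilbert--Schmidt operators $h(D)W$ with $h\in C_c$, $W\in L_2$, obtained by truncating $V$ in space and $g_0$ in frequency. Hence $I+RV$ is a boundedly invertible operator plus a compact one, thus Fredholm, and $\cL-\lambda$ is a product of an invertible and a Fredholm operator, hence Fredholm; therefore $\lambda\notin\essspec(\cL)$.

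The main technical point to verify carefully is the compactness of $g_0(D)V$: subtracting off the constant-at-infinity part $-\lambda^{-1}I$ of $R$ is essential, since $RV$ itself is \emph{not} compact, and it is exactly this separation that converts the nonlocal factor into a genuine multiplication operator plus a compact remainder. The remaining care is routine bookkeeping ensuring that the Weyl sequences above are normalized and weakly null, for which it suffices to spread out the translations $y$ and the frequencies $\xi_n$. With these in place, the three assertions of the theorem follow as explained in the first paragraph.
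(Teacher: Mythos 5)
Your proof is correct, and it genuinely diverges from the paper's, most notably in the reverse inclusion. For the inclusion $[a_{\rm min},a_{\max}]\cup\mathcal{S}_V\subseteq\essspec(\cL)$ the paper also builds singular Weyl sequences, but by concentration rather than your dual operations: for $\lambda\in(a_{\rm min},a_{\max})$ it takes functions whose Fourier transforms are normalized indicators of shrinking cubes around a point $\xi_0$ with $\hat a(\xi_0)=\lambda$ (these spread out in $x$, killing the $V$-term), and for $\lambda\in\mathcal{S}_V$ it takes normalized indicators of shrinking cubes around a Lebesgue point of $V$ (these spread out in $\xi$, killing the convolution term); your translation-to-infinity and modulation-to-high-frequency constructions exploit the same space/frequency dichotomy and spare you the Lebesgue-point machinery, at the cost of the almost-orthogonality bookkeeping you correctly flag as routine. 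The real difference is the inclusion $\essspec(\cL)\subseteq[a_{\rm min},a_{\max}]\cup\mathcal{S}_V$: the paper does not prove it, deferring to the proof of Theorem 2.1 of \cite{BPZ-JMAA}, a paper that works under the stronger hypothesis $\hat V\in L_1$; you instead give a complete Fredholm argument --- factor $\cL-\lambda=(\cA-\lambda)(I+RV)$, split the resolvent as $R=-\lambda^{-1}I+g_0(D)$ with $g_0\in C_0(\mathbb R^d)$ (using $\lambda\neq 0$, which holds since $0\in[a_{\rm min},a_{\max}]$), invert the multiplication operator $I-\lambda^{-1}V$ using $\lambda\notin\mathcal{S}_V$, and verify that $g_0(D)V$ is compact as a norm limit of Hilbert--Schmidt operators $h(D)W$, which is exactly where the decay condition \eqref{vani_infi} enters. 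This makes your proof self-contained and independently confirms that the weaker hypotheses of the present paper suffice. Finally, writing $\Sigma:=[a_{\rm min},a_{\max}]\cup\mathcal{S}_V$, your treatment of the last assertion is sharper than the paper's: you note that an accumulation point of $\discspec(\cL)$ lies in $\essspec(\cL)=\Sigma$ while being a limit of points outside $\Sigma$, hence lies on $\partial\Sigma$, which is exactly what the theorem asserts; the paper instead claims that the discrete spectrum occupies only $[a_{\rm min}+V_{\rm min},\mu_0)\cup(\mu_1,a_{\max}+V_{\max}]$ and can accumulate only at $\mu_0$ or $\mu_1$ --- a claim that overlooks eigenvalues inside gaps of $\Sigma$, whose existence the paper itself establishes in Section \ref{ss_gap}.
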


\begin{proof}
First we show that $[a_{\rm min}, a_{\max}]\subset\sigma_{\rm ess}(\cL)$,
Assume that $\lambda\in (a_{\rm min}, a_{\max})$. Since $\hat a$ is continuous, there exists $\xi_0$ such that
  $\hat a(\xi_0)=\lambda$.  Consider  the sequence of functions
  $\hat\varphi_n(\xi)=\big(\frac n2\big)^{\frac d2}\mathbf{1}_{\xi_0+[-\frac1n,\frac1n]^d}$, $n=1,2,\ldots$ Observe that
  $\|\varphi_n\|_{L_2(\mathbb R^d)}=1$.

  Denote by $\mathcal{F}$ the Fourier transform in $\mathbb R^d$, and let $\varphi_n=\mathcal{F}^{-1}\hat \varphi_n$.
  Then, after direct computation, we have $\varphi_n=e^{-i\xi_0\cdot x}\big(\frac n2\big)^{\frac d2}\prod_{j=1}^{d}\frac 2 {x_j}\sin(\frac {x_j}n)$. Observe that $|\varphi_n|\leqslant \big(\frac2n\big)^{\frac d2}$. Due to \eqref{ess_ogr} and \eqref{vani_infi},
  \begin{align*}
   \|V\varphi_n\|^2_{L_2(\mathbb R^d)}= & \|V\varphi_n\|^2_{L_2([-n^\frac12,n^\frac12]^d)}+
   \|V\varphi_n\|^2_{L_2(\mathbb R^d\setminus[-n^\frac12,n^\frac12]^d)}
   \\
\leqslant & 4^d \|V\|^2_{L_\infty(\mathbb R^d)}n^{-\frac d2}+
   \mathop{\mathrm{esssup}}\limits_{x\in\mathbb R^d\setminus[-n^\frac12,n^\frac12]^d} |V(x)|^2\,\|\varphi_n\|_{L_2(\mathbb R^d)}
  \to\,0\quad\text{as}\quad n\to\infty.
  \end{align*}
We also have
$$
\|a\ast\varphi_n-\lambda\varphi_n\|^2_{L_2(\mathbb R^d)}=\|(\hat a-\lambda)\hat\varphi_n\|^2_{L_2(\mathbb R^d)} \to 0,\quad
\hbox{as }n\to\infty.
$$
Combining the last two limit relations yields
$$
\|(\cL - \lambda) \varphi_n\|_{L_2(\mathbb R^d)}\,\to\,0\quad\hbox{as }n\to\infty.
$$
Since the family $\{\varphi_n\}_{n=1}^\infty$ is not compact, this relation implies that $\lambda\in\sigma_{\rm ess}(\cL)$.

Assume now that $\lambda\in \mathcal{S}_V$. Then $|V^{-1}(\lambda-\frac1n,\lambda+\frac1n)|>0$ for any $n>0$, here and later on for a set $S\subset\mathbb R^d$ we denote by $|S|$ its Lebesgue measure.  Consider a Lebesgue
point $x_0$ of $V$ such that $x_0\in V^{-1}(\lambda-\frac1n,\lambda+\frac1n)$. By the definition of a Lebesgue point
there exists $\delta_n>0$ such that
$$
\int\limits_{ K_{\delta_n}(x_0)}|V(x)-V(x_0)|\,dx< \frac1n|K_{\delta_n}(x_0)|,
$$
here the symbol $K_\delta(x)$ stands for the cube $x+[-\delta,\delta]^d$.
We choose $\delta_n\leqslant \frac1n$ and let
$$
\phi_n=(|K_{\delta_n}(x_0)|^{-\frac12})\mathbf{1}_{K_{\delta_n}(x_0)}, \quad n=2,3,\ldots
$$
Then $\|\phi_n\|_{L_2(\mathbb R^d)}=1$ and
\begin{equation}\label{estimvcontr}
\begin{aligned}
\|(V-\lambda)\phi_n\|^2_{L_2(\mathbb R^d)}\leqslant & \frac2{|K_{\delta_n}\!(x_0)|}\int\limits_{ K_{\delta_n}\!(x_0)}|V(x)-V(x_0)|^2\,dx
\\
&+
 \frac2{|K_{\delta_n}\!(x_0)|}\int\limits_{ K_{\delta_n}\!(x_0)}|V(x_0)-\lambda|^2\,dx
\\
\leqslant& \frac{4\|V\|_{L_\infty(\mathbb R^d)}}{|K_{\delta_n}\!(x_0)|}\int\limits_{ K_{\delta_n}\!(x_0)}|V(x)-V(x_0)|\,dx+
\frac2{n^2}
\\
\leqslant & \frac1n\big(4\|V\|_{L_\infty(\mathbb R^d)}+2\big).
\end{aligned}
\end{equation}
One can easily calculate the Fourier transform of $\phi_n$:
$$
\hat\phi_n(\xi)=e^{i\xi\cdot x_0}(2\delta_n)^{-\frac d2}\prod_{j=1}^{d}\frac2{\xi_j}\sin(\delta_n\xi_j).
$$
From this formula we deduce that  $|\hat\phi_n(\xi)|\leqslant (2\delta_n)^{\frac d2}$. Since $\|\hat\phi_n\|_{L_2(\mathbb R^d)}=1$,
the $L_2$ norm of the convolution $a\ast\phi_n$ can be estimates as follows:
$$
\|a\ast\phi_n\|^2_{L_2(\mathbb R^d)}=\|\hat a\hat\phi_n\|^2_{L_2(\mathbb R^d)}=\int\limits_{K_{\delta_n^{-\frac12}}(0)}|\hat a|^2(\xi)|\hat \phi_n|^2(\xi)d\xi+\int\limits_{\mathbb R^d\setminus K_{\delta_n^{-\frac12}}(0)}|\hat a|^2(\xi)|\hat\phi_n|^2(\xi)d\xi
$$
$$
\leqslant \|\hat a\|^2_{L_\infty(\mathbb R^d)}4^d \delta_n^{\frac d2}+\max\limits_{\xi\in \mathbb R^d\setminus K_{\delta_n^{-\frac12}}(0)} |\hat a(\xi)| \longrightarrow 0 \quad\hbox{as }n\to\infty.
$$
Combining this inequality with \eqref{estimvcontr} we conclude that $\|(\cL-\lambda)\phi_n\|_{L_2(\mathbb R^d)}\to 0$.
Since by construction the family $\{\phi_n\}_{n=1}^\infty$ is not compact, this implies that $\lambda\in \sigma_{\rm ess}(\cL)$.
Therefore, $[a_{\rm min},a_{\max}]\cup \mathcal{S}_V\subset \sigma_{\rm ess}(\cL)$

The opposite inclusion $\sigma_{\rm ess}(\cL)\subset[a_{\rm min},a_{\max}]\cup \mathcal{S}_V$ can be justified in exactly the same way as in the proof of Theorem 2.1 in \cite{BPZ-JMAA}.  This completes the proof of the first statement of the theorem.

Since the quadratic form $(\cL u,u)$ satisfies an evident estimate
$$
\big(a_{\rm min}+V_{\rm min}\big) \|u\|^2_{L_2(\mathbb R^d)}\leqslant
(\cL u,u)\leqslant \big(a_{\max}+V_{\max}\big) \|u\|^2_{L_2(\mathbb R^d)},
$$
the spectrum of $\cL$ is situated in the interval $[a_{\rm min}+V_{\rm min},a_{\max}+V_{\max}]$, and, due to the first
statement of the theorem,  the discrete spectrum, if exists, occupies the segments $[a_{\rm min}+V_{\rm min},\mu_0)$ and
  $(\mu_1,a_{\max}+V_{\max}]$.   An accumulation point of the discrete spectrum of $\cL$ is an element of the essential
  spectrum of $\cL$. Therefore, this point must coincide either with $\mu_0$ or with $\mu_1$.
\end{proof}

\section{Discrete spectrum}\label{ss_disc_spe}
We turn now to the discrete spectrum of $\cL$ and consider its behaviour  in the segment $(\mu_1, a_{\max}+V_{\max}]$. In order to study the discrete spectrum in the segment $[a_{\rm min}+V_{\rm min},\mu_0)$ it suffices the exchange $\cL$ to $-\cL$.
It is clear that the necessary condition for the existence of a discrete spectrum in $(\mu_1, a_{\max}+V_{\max}]$ is the validity of the following inequality:
$$a_{\max}+V_{\max} > \max\{a_{\max}, V_{\max}\}.
$$
This inequality together with \eqref{aV0} imply that $a_{\max}>0$ and $V_{\max} > 0$.

We consider further the case $\mu_1=a_{\max}>0$ and $0< V_{\max}\leqslant a_{\max}$, and assume without loss of the generality  that  $a_{\max} = \hat a(0)$.

\begin{theorem}\label{T2}
Let $\mu_1=a_{\max}>0$, $0< V_{\max}\leqslant a_{\max}$, and assume that $a_{\max} = \hat a(0)$ and the following two conditions hold:

\noindent
1) there exist   constants $\alpha>0$, $\vartheta>0$ and $c>0$ such that
\begin{equation}\label{T-1-1}
\hat a(\xi)   \ge  a_{\max} - c | \xi|^\alpha 
\quad \text{for all}\quad  |\xi| \leqslant \vartheta;
\end{equation}

\noindent 2) there exist  constants $\gamma>0$,  $q>0$ and $C>0$ such that
\begin{equation}\label{T-1-2}
V(x)\ \ge \ C |x|^{-\gamma} \qquad \mbox{ for all } \; |x| \ge q.
\end{equation}

\noindent
If $ \alpha > \gamma $, then the operator $\cL$ has infinitely many eigenvalues in $(\mu_1, a_{\max}+V_{\max}]$  with the accumulation point $\mu_1$.
\end{theorem}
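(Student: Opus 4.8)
The plan is to count eigenvalues above the top of the essential spectrum by the variational principle. The operator $\cL$ is bounded and self-adjoint ($a\in L_1$ makes the convolution bounded, $V\in L_\infty$), so by Theorem~\ref{T1} its spectrum in $(\mu_1,a_{\max}+V_{\max}]$ consists only of discrete eigenvalues, and $\mu_1=a_{\max}=\sup\sigma_{\rm ess}(\cL)$ is the only possible accumulation point there (since $\mathcal S_V\subset[V_{\rm min},V_{\max}]$ and $V_{\max}\le a_{\max}$). By the min--max principle for the top of the spectrum, if for every $n$ one produces an $n$-dimensional subspace $\cG_n\subset L_2(\mathbb R^d)$ with $(\cL u,u)>\mu_1\|u\|^2_{L_2(\mathbb R^d)}$ for all $0\neq u\in\cG_n$, then $\cL$ has at least $n$ eigenvalues in $(\mu_1,a_{\max}+V_{\max}]$; letting $n\to\infty$ gives infinitely many, and being confined to a bounded interval whose only essential point is $\mu_1$, they must accumulate at $\mu_1$. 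Since $a_{\max}=\hat a(0)$ and $\mu_1\|u\|^2=a_{\max}\int|\hat u|^2$, the target reduces to
\[
(\cL u,u)-\mu_1\|u\|^2=\int_{\mathbb R^d}\big(\hat a(\xi)-a_{\max}\big)|\hat u(\xi)|^2\,d\xi+\int_{\mathbb R^d}V(x)|u(x)|^2\,dx>0,
\]
a competition between a nonpositive ``kinetic'' term and the potential term.

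I would build $\cG_n$ from smooth bumps sitting far from the origin. Fix $\chi\in C_c^\infty(\mathbb R^d)$ with $\supp\chi\subset\{|x|\le1\}$, $\|\chi\|_{L_2}=1$ (so $\hat\chi$ is Schwartz); for a small scale $\rho>0$ place $n$ centres on a ray, $x_j=3j\rho^{-1}e_1$, and set $v^{(j)}(x)=\rho^{d/2}\chi\big(\rho(x-x_j)\big)$. Then $\|v^{(j)}\|_{L_2}=1$, the supports $\{|x-x_j|\le\rho^{-1}\}$ are pairwise disjoint, and on each of them $q\le 2\rho^{-1}\le|x|\le(3n+1)\rho^{-1}$ for $\rho$ small, so \eqref{T-1-2} gives the uniform bound $V(x)\ge C(3n+1)^{-\gamma}\rho^\gamma$. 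In Fourier variables $\hat v^{(j)}(\xi)=e^{-\iu x_j\cdot\xi}\rho^{-d/2}\hat\chi(\xi/\rho)$, whose bulk lies in $\{|\xi|\lesssim\rho\}\subset\{|\xi|\le\vartheta\}$, while the Schwartz decay of $\hat\chi$ gives Fourier mass $O(\rho^{2M-d})$ in $\{|\xi|>\vartheta\}$ for every $M$.

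For $u=\sum_j c_j v^{(j)}$ the disjoint supports make $\{v^{(j)}\}$ orthonormal, so $\|u\|^2=\sum_j|c_j|^2$, and they diagonalize the potential term, $\int V|u|^2=\sum_j|c_j|^2\int V|v^{(j)}|^2\ge C(3n+1)^{-\gamma}\rho^\gamma\|u\|^2$ — this is where disjointness kills the potential cross terms. For the kinetic term write $\hat u(\xi)=\rho^{-d/2}\hat\chi(\xi/\rho)P(\xi)$ with $P(\xi)=\sum_j c_j e^{-\iu x_j\cdot\xi}$ and $|P(\xi)|^2\le n\|u\|^2$; splitting $\{|\xi|\le\vartheta\}$ and its complement and using \eqref{T-1-1} together with the tail bound yields
\[
\int_{\mathbb R^d}\big(\hat a-a_{\max}\big)|\hat u|^2\,d\xi\ \ge\ -c\,n\|u\|^2\rho^{\alpha}\!\!\int_{\mathbb R^d}\!|\eta|^\alpha|\hat\chi(\eta)|^2d\eta-(a_{\max}-a_{\rm min})n\|u\|^2\,O(\rho^{2M-d})\ \ge\ -c_1(n)\,\rho^\alpha\|u\|^2,
\]
valid for $2M-d>\alpha$ and $\rho$ small. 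Combining the two estimates gives $(\cL u,u)-\mu_1\|u\|^2\ge\big(C(3n+1)^{-\gamma}\rho^\gamma-c_1(n)\rho^\alpha\big)\|u\|^2$; since $\alpha>\gamma$, for $\rho=\rho_n$ small enough this is strictly positive, and $\cG_n=\mathrm{span}\{v^{(1)},\dots,v^{(n)}\}$ is as required.

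The main obstacle is exactly the one emphasized before Theorem~\ref{T1}: multiplication by $V$ is not relatively compact with respect to the convolution operator, so no part of $V$ may be discarded as a compact perturbation, and the potential is genuinely non-diagonal in the Fourier representation in which the kinetic term is controlled. The construction must reconcile two competing demands — frequency localization near the maximum of $\hat a$ (to use \eqref{T-1-1}) and spatial localization in the far region where \eqref{T-1-2} holds — while keeping all cross terms in hand. Smooth, compactly supported bumps with pairwise disjoint supports resolve this: disjointness annihilates the potential cross terms, smoothness confines almost all Fourier mass to $\{|\xi|\le\vartheta\}$, and $\alpha>\gamma$ is precisely what lets the gain $\rho^\gamma$ beat the loss $\rho^\alpha$ as $\rho\to0$. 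The remaining care is quantitative: tracking the $n$-dependence of $c_1(n)$ and $C(3n+1)^{-\gamma}$ and choosing $\rho=\rho_n\to0$ so each $\cG_n$ retains a strictly positive but vanishing gap, which is what forces accumulation at $\mu_1$.
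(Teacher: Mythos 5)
Your proposal is correct: the min--max reduction, the disjoint-support construction, the potential lower bound $C(3n+1)^{-\gamma}\rho^\gamma$, and the kinetic bound via the factorization $\hat u(\xi)=\rho^{-d/2}\hat\chi(\xi/\rho)P(\xi)$ with $|P(\xi)|^2\le n\|u\|^2$ (which is legitimate precisely because $\hat a-a_{\max}\le 0$) all hold up, and $\alpha>\gamma$ indeed lets $\rho_n$ be chosen after $n$ so that each $n$-dimensional subspace works. The route, however, is genuinely different from the paper's. The paper does not prove Theorem~\ref{T2} directly: it proves the stronger Theorem~\ref{th_enforces}, whose hypotheses \eqref{T-1-1bis}--\eqref{T-1-2bis} are only \emph{averaged} (integral) bounds over annuli $G_R$, and deduces Theorem~\ref{T2} as a special case. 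Its test family is a single radial bump supported in an annulus, dilated through a lacunary sequence of scales $R_n=2^{Mn}R$; the potential cross terms vanish because distinct annuli are disjoint, while the convolution cross terms are estimated by Cauchy--Schwarz on the Fourier side and beaten by the diagonal terms after choosing the lacunarity parameter $M$ large, giving positive definiteness on one infinite-dimensional span. Your construction instead uses, for each $n$, identical bumps at a common scale translated along a ray, so all functions share the same frequency localization and the cross-term bookkeeping disappears into the elementary bound on $|P|^2$; the price is $n$-dependent constants ($(3n+1)^{-\gamma}$ gain, $c_1(n)\sim n$ loss), harmless here. The trade-off to be aware of: your argument leans on the \emph{pointwise} bound \eqref{T-1-2} holding on each small ball far from the origin, so it would not survive the weakening to the averaged condition \eqref{T-1-2bis} — an average over $G_R$ controls nothing on a ball of radius $\rho^{-1}\ll R$ — whereas the paper's annular test functions are exactly matched to the averaged hypotheses. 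So your proof is a more elementary argument for Theorem~\ref{T2} itself, while the paper's construction buys the stronger Theorem~\ref{th_enforces}.
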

The conditions in \eqref{T-1-1}--\eqref{T-1-2} can be essentially relaxed, instead of point-wise estimates it is sufficient to assume
that weaker estimates in integral form are valid.   Denote
\begin{align*}
&
\langle V\rangle(R):=\frac1{\mathrm{meas}(G_R)}\int\limits_{G_R}V(x)\,dx,\qquad  G_R:=\{x\in\mathbb R^d\,:\,R\leqslant |x|
\leqslant 2R\},
\\
&
\langle \hat a\rangle(r):=\frac1{\mathrm{meas}(G_r)} \int\limits_{G_r} \hat a(\xi)d\xi,
\end{align*}

The following statement holds.
\begin{theorem}\label{th_enforces}
   Assume that $\mu_1=a_{\max}>0$, $0< V_{\max}\leqslant a_{\max}$, and  $a_{\max} = \hat a(0)$. Assume,
   moreover, that the following two conditions hold:

\noindent
1) there exist   constants $\alpha>0$,  $\vartheta>0$ and $c>0$ such that
\begin{equation}\label{T-1-1bis}
\langle\hat a\rangle(r) \ \ge \ a_{\max} - c  r^\alpha 
\quad \mbox{ for all  } \;  r \leqslant \vartheta;
\end{equation}

\noindent 2) there exist  constants $\gamma>0$, $q>0$ and $C>0$ such that
\begin{equation}\label{T-1-2bis}
\langle V\rangle(R)\ \ge \ C R^{-\gamma} \qquad \mbox{ for all } \; R \ge q.
\end{equation}

\noindent
If $ \alpha > \gamma $, then the operator $\cL$ has infinitely many eigenvalues in $(\mu_1, a_{\max}+V_{\max}]$, with the accumulation point $\mu_1$.
\end{theorem}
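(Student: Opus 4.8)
The plan is to use the variational (Glazman) description of the spectrum lying above the essential spectrum. By Theorem~\ref{T1} the top of $\sigma_{\rm ess}(\cL)$ equals $\mu_1=a_{\max}$, and any eigenvalue in $(\mu_1,a_{\max}+V_{\max}]$ can accumulate only at $\mu_1$. Writing $\mathcal A u:=a\ast u$ and $B:=a_{\max}\I-\mathcal A$, the operator $B$ is bounded and non-negative (its symbol $a_{\max}-\hat a$ is non-negative), and
\[
(\cL u,u)-a_{\max}\|u\|^2=(Vu,u)-(Bu,u),\qquad \|\cdot\|:=\|\cdot\|_{L_2(\mathbb R^d)}.
\]
Hence it suffices to produce, for every $n$, an $n$-dimensional subspace $\mathcal M_n$ on which $(Vu,u)-(Bu,u)>0$: this forces at least $n$ eigenvalues above $a_{\max}$, and letting $n\to\infty$ yields infinitely many, necessarily accumulating at $\mu_1$.

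I would build $\mathcal M_n$ from trial functions living in disjoint shells. Fix a rapidly growing sequence $R_k=R_0\rho^k$ with $\rho>2$ and $R_0\ge q$, so that the shells $G_{R_k}$ are pairwise disjoint, and fix a constant $\beta\in(\gamma/\alpha,1)$, so that $\alpha\beta>\gamma$. To each $k$ I attach a normalized $u_{(k)}$ essentially supported in $G_{R_k}$ and satisfying the two per-shell estimates
\[
(Vu_{(k)},u_{(k)})\ge \tfrac{C}{2}\,R_k^{-\gamma},\qquad (Bu_{(k)},u_{(k)})\le C_1\,R_k^{-\alpha\beta},
\]
with constants independent of $k$.

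The function $u_{(k)}$ is a wave packet with two well-separated scales: a smooth profile whose Fourier transform is approximately the normalized indicator of the Fourier shell $G_{r_k}$, $r_k:=R_k^{-\beta}$, translated to a centre $z_k$ with $|z_k|\sim R_k$ inside $G_{R_k}$. Translation leaves $|\hat u_{(k)}|$ unchanged, so $(Bu_{(k)},u_{(k)})\approx a_{\max}-\langle\hat a\rangle(r_k)\le c\,r_k^{\alpha}=c\,R_k^{-\alpha\beta}$ by \eqref{T-1-1bis}; since the profile has physical extent $1/r_k=R_k^{\beta}\ll R_k$, it genuinely fits inside the shell. For the potential term, $|u_{(k)}|^2$ is a bump of width $R_k^{\beta}$ at $z_k$, so $(Vu_{(k)},u_{(k)})$ is a local average of $V$ near $z_k$; averaging this over centres equidistributed in $G_{R_k}$ reproduces the full shell average $\langle V\rangle(R_k)\ge C R_k^{-\gamma}$ of \eqref{T-1-2bis}, whence some $z_k$ gives the stated lower bound. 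The main obstacle is exactly this simultaneous use of the two \emph{averaged} hypotheses by one family of trial functions: \eqref{T-1-2bis} is sharp only against a uniform weight on the physical shell, whereas \eqref{T-1-1bis} is sharp only against a uniform weight on the Fourier shell, and these requirements compete through the uncertainty principle. The assumption $\alpha>\gamma$ is precisely what creates the slack $r_kR_k=R_k^{1-\beta}\to\infty$ that allows both to hold; the technical heart is to quantify the edge and tail errors produced by smoothing the two sharp indicators and by truncating the packet to keep supports disjoint, and to confirm they are of lower order than $R_k^{-\gamma}$ and $R_k^{-\alpha\beta}$.

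Finally I assemble $\mathcal M_n=\mathrm{span}\{u_{(1)},\dots,u_{(n)}\}$. The $u_{(k)}$ are orthonormal by disjointness of supports, and for the same reason the multiplication term has no cross terms, so for $u=\sum_k c_k u_{(k)}$ one gets $(Vu,u)=\sum_k|c_k|^2(Vu_{(k)},u_{(k)})\ge\frac{C}{2}\sum_k|c_k|^2 R_k^{-\gamma}$. The convolution term is controlled by the non-negativity of $B$: from $(Bu,u)=\|B^{1/2}u\|^2\le\big(\sum_k|c_k|(Bu_{(k)},u_{(k)})^{1/2}\big)^2$ and Cauchy--Schwarz,
\[
(Bu,u)\le\Big(\sum_k|c_k|^2 R_k^{-\gamma}\Big)\Big(C_1\sum_k R_k^{\gamma-\alpha\beta}\Big).
\]
Since $\alpha\beta>\gamma$ and $R_k=R_0\rho^k$, the series $\sum_k R_k^{\gamma-\alpha\beta}$ converges and can be made smaller than $\tfrac{C}{2C_1}$ by choosing $\rho$ large. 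Therefore $(Vu,u)-(Bu,u)>0$ on $\mathcal M_n\setminus\{0\}$, which finishes the proof. The same scheme gives Theorem~\ref{T2}: the pointwise bounds \eqref{T-1-1}--\eqref{T-1-2} let one bypass the averaging step and simply take $u_{(k)}(x)=R_k^{-d/2}\chi(x/R_k)$ with a fixed $\chi\in C_0^\infty$ supported in an annulus.
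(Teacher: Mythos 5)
Your overall framework is sound and coincides with the paper's: by Glazman's lemma it suffices to exhibit, for each $n$, an $n$-dimensional subspace on which $((\cL-\mu_1)u,u)>0$ for $u\neq0$, and your assembly step --- orthogonality from disjoint supports, vanishing of the $V$-cross-terms, and control of the convolution cross-terms via the triangle inequality for $\|B^{1/2}\cdot\|$ plus a weighted Cauchy--Schwarz over lacunary radii --- is correct and is essentially the paper's own cross-term argument in cleaner form. The genuine gap is in the per-shell estimates, which you assert and explicitly defer (``the technical heart''). This deferred step is not routine, and with your parametrization it cannot cover the full range $\gamma<\alpha$. Concretely, your bound $(Vu_{(k)},u_{(k)})\ge\tfrac{C}{2}R_k^{-\gamma}$ is to be extracted from \eqref{T-1-2bis} by averaging over centres $z\in G_{R_k}$ of a packet of width $R_k^{\beta}$. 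That average equals $\int V\,\overline{w}\,dx$ with $\overline{w}=|G_{R_k}|^{-1}\,\mathbf{1}_{G_{R_k}}\ast|u_{(k)}|^{2}$, and $\overline{w}$ differs from $|G_{R_k}|^{-1}\mathbf{1}_{G_{R_k}}$ on a boundary layer of width $\sim R_k^{\beta}$ around $\partial G_{R_k}$. Since $V$ may change sign (\eqref{T-1-2bis} is only an averaged hypothesis) and is only known to tend to zero \emph{with no rate}, this produces an error of order $\varepsilon(R_k)\,R_k^{\beta-1}$, where $\varepsilon(R):=\mathop{\mathrm{esssup}}_{|x|\ge R/2}|V|$; dominating it by the main term $R_k^{-\gamma}$ forces $\beta\le 1-\gamma$. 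Combined with your requirement $\beta>\gamma/\alpha$, this needs $\gamma<\alpha/(1+\alpha)$, and for $\gamma\ge 1$ (perfectly admissible, e.g.\ $\alpha=2$, $\gamma=3/2$) no admissible $\beta$ exists at all. So either a genuinely different selection-of-centres argument is required (exploiting \eqref{T-1-2bis} for many nearby shells simultaneously), or the two-scale design must be abandoned.

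The paper's proof avoids, rather than fights, the uncertainty-principle tension you identify as the crux. It uses a single scale: $\psi_R(x)=R^{-d/2}\psi(x/R)$ with a flat plateau covering exactly $G_R$, so that \eqref{T-1-2bis} applies verbatim to $(V\psi_R,\psi_R)$, with no centre-averaging and no boundary-layer loss. On the Fourier side it is not necessary that $|\hat\psi_R|^2$ resemble a Fourier-shell indicator: since $a_{\max}-\hat a\ge0$ pointwise, one may majorize $|\hat\psi(R\xi)|^2$ by its maximum on each dyadic shell $G_{2^{-j}\vartheta}$, apply \eqref{T-1-1bis} shell by shell, and use the Schwartz decay of $\hat\psi$ to sum the resulting series, obtaining $\big((\hat a-a_{\max})\hat\psi_R,\hat\psi_R\big)\ge -C R^{-\alpha}$ --- even better than your $R^{-\alpha\beta}$. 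In short, the sign of $a_{\max}-\hat a$ removes the need for $\hat u$ to be ``sharp'' against the Fourier average, and this is what lets one take the physical profile sharp against \eqref{T-1-2bis} instead; the condition $\alpha>\gamma$ then enters only at the end, when comparing $R^{-\alpha}$ with $R^{-\gamma}$, not as a mediator of the two competing localizations.
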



Clearly, the statement of Theorem \ref{T2} follows from that of Theorem \ref{th_enforces}. Therefore, it suffices to prove
Theorem \ref{th_enforces}. 

\begin{proof}[Proof of Theorem \ref{th_enforces}]
Our goal is to
construct a countable family of functions such that the  quadratic form of the operator $\cL - \mu_1$ is positive definite on the linear span of these functions.


Let $\psi \in C_0^\infty(\mathds{R}^d)$ be an infinitely differentiable radially symmetric real positive function such that
\begin{itemize}
\item $\supp \psi \subset \{ \frac12<|x|<\frac52 \}$,
\item $\psi(x)\equiv h = const$ for $x\in \{1<|x|<2 \}$,
\item $0\leqslant \psi(x)\leqslant h$ for all $x\in\mathbb R^d$,
\item $\|\psi\|_{L_2(\mathds{R}^d)}=1$.
\end{itemize}
Observe that in this case the Fourier transform of $\psi$ is also radially symmetric and real.
For an arbitrary $R>0$ denote  $\psi_R(x) = R^{-d/2}\psi\big( \frac{x}{R} \big)$. Then
\begin{equation*}
\supp \psi_R \subset \left\{ \frac12 R <|x|<\frac52 R\right\}\quad \text{and}\quad  \|\psi_R\|_{L_2(\mathds{R}^d)}=1.
\end{equation*}
For the Fourier transform $\hat \psi(\xi)$ of the function $\psi(x)$ we have $\hat \psi_R(\xi) = R^{d/2} \hat \psi (R\xi)$.

The quadratic form of the operator  $\cL - \mu_1$ reads
\begin{equation}\label{QF}
  \big( (\cL - \mu_1)\psi_R,\psi_R \big) = \big( (\hat a - a_{\max}) \hat \psi_R, \hat \psi_R \big) + \big( V \psi_R, \psi_R \big).
\end{equation}
 Taking sufficiently large $R$ we estimate separately each term on the right-hand side of  \eqref{QF}.
It follows from  condition 2) of the theorem that
\begin{equation}\label{V}
\begin{aligned}
 \big( V \psi_R, \psi_R \big) \geqslant  &\int\limits_{R\leqslant |x|\leqslant 2R} R^{-d}\psi^2 \left( \frac{x}{R} \right)\, V(x)\,dx
 =R^{-d}h^2 \, \int\limits_{R\leqslant |x|\leqslant 2R}V(x)\,dx
 \\
 \geqslant &h^2 \mathrm{meas}(G_1) \langle V\rangle(R)
\geqslant  C h^2 \mathrm{meas}(G_1) R^{-\gamma}.
\end{aligned}
\end{equation}
Let us estimate the first term on the right-hand side of \eqref{QF}.
Since $\psi \in C_0^\infty(\mathds{R}^d)$, then $\hat \psi(\xi)$ is a function of the Schwarz class. Therefore,
$$
\hat \psi (\xi) \ |\xi|^k \ \to 0, \qquad |\xi| \to \infty \qquad \forall k=1,2, \ldots.
$$
Consequently,
$$
\max\limits_{r<|\xi|<2r} |\hat \psi (\xi)|^2 (1+r)^{d+\alpha+1}\leqslant C \quad\text{for all}\quad r>0.
$$
By this relation, Condition 1) of the theorem and the boundedness of $\hat a(\cdot)$, we obtain
\begin{equation}\label{A_bis0}
\begin{aligned}
R^d\int\limits_{|\xi|\leqslant \vartheta} |\hat \psi(R\xi)|^2 \, \big( \hat a (\xi) - a_{\max} \big)d \xi
=&R^d\sum\limits_{j=1}^\infty \int\limits_{G_{2^{-j}\vartheta}}
 |\hat \psi(R\xi)|^2 \, \big( \hat a (\xi) - a_{\max} \big)d \xi
\\
\geqslant & R^d\sum\limits_{j=1}^\infty\max\limits_{G_{(2^{-j}\vartheta R)}}|\hat \psi(\xi)|^2 \!\!\!
 \int\limits_{G_{2^{-j}\vartheta}} \!\!\! \big( \hat a (\xi) - a_{\max} \big)d \xi
\\
\geqslant & -R^d\sum\limits_{j=1}^\infty \frac {C_1 \mathrm{meas}(G_{(2^{-j}\vartheta)})\,(2^{-j}\vartheta)^\alpha}
 {(1+2^{-j}\vartheta R)^{d+\alpha+1}}
 \\
=&-R^{-\alpha}\sum\limits_{j=1}^\infty \frac {C_2 (2^{-j}\vartheta R)^{d+\alpha}}
 {(1+2^{-j}\vartheta R)^{d+\alpha+1}}
  \\
 =&-R^{-\alpha}\Big(\sum\limits_{j=1}^{j_0} \frac {C_2 (2^{-j}\vartheta R)^{d+\alpha}}
 {(1+2^{-j}\vartheta R)^{d+\alpha+1}}
 \\
 &+\sum\limits_{j=j_0+1}^{\infty} \frac {C_2 (2^{-j}\vartheta R)^{d+\alpha}}
 {(1+2^{-j}\vartheta R)^{d+\alpha+1}} \Big)
  \\
 \geqslant & -R^{-\alpha}\Big(\sum\limits_{j=1}^{j_0} \frac {C_2}
 {2^{-j}\vartheta R} +\sum\limits_{j=j_0+1}^{\infty} C_2 (2^{-j}\vartheta R)^{d+\alpha} \Big)
 \\
 \geqslant & -C_3 R^{-\alpha},
\end{aligned}
\end{equation}
where $j_0\in\mathbb N$ is such that $1\leqslant 2^{-j_0}\vartheta R<2$.
Since $\hat\psi(\cdot)$  is a Schwarz class function, for $R>1$ the integral over the set $\{|\xi|>\vartheta\}$ can be estimated
as follows:
\begin{equation}\label{A_bis00}
\begin{aligned}
R^d\int\limits_{|\xi|> \vartheta} |\hat \psi(R\xi)|^2 \, \big( \hat a (\xi) - a_{\max} \big)d \xi\geqslant &
 -2\|\hat a\|_{L_\infty(\mathbb R^d)}
  \int\limits_{|\xi| > \vartheta R} |\hat\psi(\xi)|^2  d\xi
  \\
  \geqslant & - c_4 (\vartheta R)^{-2\alpha}.
\end{aligned}
\end{equation}
Combining \eqref{A_bis0} and  \eqref{A_bis00} yields
\begin{equation}\label{A}
\begin{aligned}
 \big( (\hat a - a_{\max}) \hat \psi_R, \hat \psi_R \big) = &R^d\int\limits_{\mathds{R}^d} |\hat \psi(R\xi)|^2 \, \big( \hat a (\xi) - a_{\max} \big)d \xi
  \\
=& R^d\int\limits_{|\xi|\leqslant \vartheta} |\hat \psi(R\xi)|^2 \, \big( \hat a (\xi) - a_{\max} \big)d \xi
\\
&+
R^d\int\limits_{|\xi|>\vartheta} |\hat \psi(R\xi)|^2 \, \big( \hat a (\xi) - a_{\max} \big)d \xi
  \\
\geqslant &-C_3  R^{-\alpha}-c_4 (\vartheta R)^{-2\alpha}> -C_5 R^{-\alpha}
\end{aligned}
 \end{equation}
 for $R>1$.
Thus, for $\gamma < \alpha$ it follows from \eqref{QF} - \eqref{A} that there exist $R_0>0$ and $c_2>0$ such that
$$
\big( (\cL - \mu_1)\psi_R,\psi_R \big) \geqslant  \frac12C h^2 \mathrm{meas}(G_1)  R^{-\gamma},
$$
if $R\geqslant R_0$.

Next we should prove that quadratic form \eqref{QF} is positive definite on the linear span of a countable set of the functions of the form $\psi_R$. Let us first assume that $a(z)$ has a finite support.
Taking $\varphi_n = \psi_{2^{2n-1}R}, \ n=1,2,\ldots,$ where $R>1$ is large enough, we conclude that
$$
\big( (\cL - \mu_1)\varphi_n, \varphi_n \big)> 0 \quad \mbox{ for all } \; n=1,2, \ldots.
$$
Moreover,
$$
\big( (\cL - \mu_1)\varphi_n, \varphi_m \big)= 0 \quad \mbox{ if } \;  n \neq m,
$$
since the supports of the functions $ \int\limits_{\mathds{R}^d} a(x-y)\varphi_n (y) dy$ and $\varphi_m$ do not intersect for large $R$. Consequently, the quadratic form of the operator $(\cL - \mu_1)$ is positive definite on the linear span of $\{\varphi_n\}$, and thus the operator $\cL $ has infinitely many eigenvalues to the right of the edge $\mu_1$.

If $\supp  a(\cdot)$ is not compact,
then we take
$$\varphi_n = \psi_{2^{Mn}R},
$$
the constant $M>2$ will be specified later on.
Since the supports of functions $\varphi_n$ and $\varphi_m$ do not intersect for $n \neq m$, the same arguments as those used in estimate \eqref{A} yield for $n<m$ the following bound
\begin{align*}
|\big( (\cL - \mu_1)\varphi_n, \varphi_{m} \big)| = & \left|\big( (\hat a - a_{{\max}}) \hat \varphi_n, \hat \varphi_m \big)\right|
\\
=& \big( 2^{Mn} \, R \big)^{d/2} \,  \big( 2^{M m} \, R \big)^{d/2}
  \left|\int\limits_{\mathds{R}^d} ( a_{{\max}} - \hat a(\xi)) \, \hat \psi (2^{Mn} \, R\xi)
   \hat \psi (2^{Mm} \, R\xi)\, d\xi\right|
\\
\leqslant &\bigg(\!\big( 2^{M n}  R \big)^{d}\! \int\limits_{\mathbb R^d}( a_{{\max}} - \hat a(\xi)) \, |\hat \psi (2^{Mn} \, R\xi)|^2d\xi\bigg)^{\!\frac12}
  \\
  &\cdot\bigg(\!\big( 2^{M m}  R \big)^{d}\! \int\limits_{\mathbb R^d}( a_{{\max}} - \hat a(\xi)) \, |\hat \psi (2^{Mm} \, R\xi)|^2d\xi\bigg)^{\frac12}
\\
\leqslant & C_5 \big( 2^{M n}  R \big)^{-\frac\alpha2}\big( 2^{M m}  R \big)^{-\frac\alpha2}
\\
=&C_5
 \big( 2^{M n}  R \big)^{-\frac\gamma2}\big( 2^{M m}  R \big)^{-\frac\gamma2}
 \big [\big( 2^{M n}  R \big)^{\frac{\gamma-\alpha}2}\big( 2^{M m}  R \big)^{\frac{\gamma-\alpha}2}\big]
\end{align*}
 It remains to choose sufficiently large $M$
so that for all $R\geqslant R_0$ it holds
$$
C_5 \, \big( 2^{M n}  R \big)^{\frac{\gamma-\alpha}2}\big( 2^{M m}  R \big)^{\frac{\gamma-\alpha}2}
< 4^{-(n+m)}\frac12C h^2 \mathrm{meas}(G_1) ,
\quad m,\,n=1,2,\ldots
$$
Then $$|\big( (\cL - \mu_1)\varphi_n, \varphi_{m} \big)|< 4^{-(m+n)}\big( (\cL - \mu_1)\varphi_m, \varphi_{m} \big)^\frac12
\big( (\cL - \mu_1)\varphi_n, \varphi_{n} \big)^\frac12,$$ and the desired positive definiteness
of the quadratic form $\big( (\cL - \mu_1)\psi, \psi \big)$  on the linear span of functions $\{\varphi_n\}_{n=1}^\infty$ follows.
\end{proof}

In the models of population dynamics the function $a$ is a probability density while the potential $V$
satisfies the inequalities $0\leqslant V\leqslant 1$, see \cite{KPMZh}.
If  $a(z)=a(|z|)\ge 0$ is a probability density, then $a_{\max}=\hat a(0) =1$. Assume that either
\begin{equation}\label{R1}
  m = \int\limits_{\mathbb{R}^d} |z|^2 \, a(z) \, dz < \infty,
\end{equation}
or
\begin{equation}\label{R2}
  a(z) \sim \frac{c_1}{|z|^{\alpha +d}} \quad \mbox{ as  } \; |z| \to \infty, \quad \mbox{with } \; 0<\alpha < 2.
\end{equation}
Then \eqref{R1} together with estimate $\sin^2 x \le x^2$ yield that
$$
1- \hat a(\xi) = \int\limits_{\mathbb{R}^d} a(z) (1 - \cos {z \xi}) \, dz =  2 \int\limits_{\mathbb{R}^d} a(z) \sin^2 \frac{z \xi}{2} \, dz \le \frac{m}{2} \xi^2,
$$
and hence
$$
\hat a(\xi) \ge 1 -  \frac{m}{2} \xi^2.
$$
The relation \eqref{R2} implies that
$$
\hat a (\xi) = 1 - c |\xi|^\alpha + o(|\xi|^\alpha), \quad \mbox{ as  } \; |\xi| \to 0,
$$
see e.g. \cite{SK}. 

\begin{corollary}
Suppose that $0< V_{\max}\le 1$ and probability density $a(z)$ satisfies one of  conditions \eqref{R1}, \eqref{R2}. Then by Theorem~\ref{T2}, the operator  $\cL$ has infinitely many eigenvalues in $(1, 1+V_{\max}]$, if
$$\liminf\limits_{|x|\to\infty}\frac{V(x)}{|x|^{-\gamma}}>0
$$  for some $0<\gamma<2$ in the case \eqref{R1} and for $0<\gamma<\alpha$ in the case \eqref{R2}.
\end{corollary}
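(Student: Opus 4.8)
The plan is to read the Corollary off directly from Theorem~\ref{T2}, since both hypotheses \eqref{R1} and \eqref{R2} feed straight into the two conditions \eqref{T-1-1}--\eqref{T-1-2} of that theorem. First I would record that for a probability density one has $a_{\max}=\hat a(0)=1$, so the standing assumptions of Theorem~\ref{T2}, namely $\mu_1=a_{\max}>0$, $0<V_{\max}\le a_{\max}$ and $a_{\max}=\hat a(0)$, all hold with $\mu_1=1$ as soon as $0<V_{\max}\le 1$. Thus it only remains to verify the two lower bounds and the decisive exponent inequality $\alpha>\gamma$.

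To produce condition 1) of Theorem~\ref{T2} I would use the two elementary estimates derived just above the Corollary. In case \eqref{R1} the bound $\hat a(\xi)\ge 1-\tfrac m2|\xi|^2$ is exactly \eqref{T-1-1} with exponent $2$, constant $c=m/2$ and any $\vartheta>0$; here the effective value of $\alpha$ entering Theorem~\ref{T2} is $2$. In case \eqref{R2} the asymptotics $\hat a(\xi)=1-c|\xi|^\alpha+o(|\xi|^\alpha)$ as $|\xi|\to0$ gives, for any fixed $c'>c$, a radius $\vartheta>0$ with $\hat a(\xi)\ge 1-c'|\xi|^\alpha$ for $|\xi|\le\vartheta$, which is \eqref{T-1-1} with the same exponent $\alpha\in(0,2)$.

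For condition 2) I would unfold the hypothesis $\liminf_{|x|\to\infty} V(x)/|x|^{-\gamma}>0$: denoting this limit value by $L>0$ and taking a threshold below $L$, there is $q>0$ such that $V(x)\ge \tfrac L2|x|^{-\gamma}$ for all $|x|\ge q$, which is precisely \eqref{T-1-2} with $C=L/2$. Finally I would check $\alpha>\gamma$: in case \eqref{R1} the effective exponent is $2$ while the hypothesis requires $0<\gamma<2$, and in case \eqref{R2} the exponent is $\alpha$ while the hypothesis requires $0<\gamma<\alpha$. In both cases $\alpha>\gamma$, so Theorem~\ref{T2} applies and yields infinitely many eigenvalues of $\cL$ in $(\mu_1,a_{\max}+V_{\max}]=(1,1+V_{\max}]$ accumulating at $\mu_1=1$.

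Since every step is a direct substitution, there is no serious obstacle here; the only points requiring a little care are the two short arguments that turn an asymptotic statement into a clean one-sided bound — passing from $\hat a(\xi)=1-c|\xi|^\alpha+o(|\xi|^\alpha)$ to $\hat a(\xi)\ge 1-c'|\xi|^\alpha$ on a neighbourhood of the origin, and from the $\liminf$ condition to the pointwise lower bound $V(x)\ge C|x|^{-\gamma}$ for large $|x|$. Both are routine, so the entire content of the Corollary reduces to the bookkeeping of matching the exponents in the hypotheses of Theorem~\ref{T2}.
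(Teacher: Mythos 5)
Your proposal is correct and follows essentially the same route as the paper: the paper itself derives the bound $\hat a(\xi)\ge 1-\frac m2|\xi|^2$ from \eqref{R1} and the expansion $\hat a(\xi)=1-c|\xi|^\alpha+o(|\xi|^\alpha)$ from \eqref{R2} immediately before the Corollary, and then reads the conclusion off from Theorem~\ref{T2} exactly as you do, converting the $\liminf$ hypothesis into the pointwise bound \eqref{T-1-2} and matching exponents ($2>\gamma$, respectively $\alpha>\gamma$). Your extra care in turning the asymptotic expansion into a one-sided bound $\hat a(\xi)\ge 1-c'|\xi|^\alpha$ near the origin is exactly the small step the paper leaves implicit.
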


Thus, for the probability distribution with the density  $a(z)$  the existence of an infinite discrete spectrum of the operator $\cL$ is determined by the asymptotic behaviour of the tails of the functions $a(\cdot)$ and $V(\cdot)$ at infinity. In particular, this statement complements the results on the existence of a positive discrete spectrum of the so-called non-local Schr{\"o}dinger operators presented in \cite{KPMZh, KMV}.

\bigskip

The case $a_{\max}=\hat a(\xi_0)$ with $\xi_0 \neq 0$ can be treated in a similar way.
We introduce a test function $\psi$ as in the proof of Theorem \ref{T2} and define $\psi_R(\cdot)$ by
$$
\psi_R(x) = R^{- d/2}\, e^{i x \xi_0} \, \psi\left(\frac{x}{R}\right).
$$
Then $\hat \psi_R(\xi) = R^{d/2} \, \hat \psi( R(\xi - \xi_0))$.
\begin{remark}
With evident modifications in the proof, condition \eqref{T-1-2bis} in the formulation of Theorem \ref{th_enforces} can be replaced with a weaker one that reads
$$
\limsup\limits_{R\to\infty} \frac{\langle V\rangle(R)}{R^{-\gamma}}\,>0.
$$
Indeed, in this case there exists $\theta>0$ and a sequence $R_j\to\infty$ as $j\to\infty$ such that $\langle V\rangle(R)\geqslant \theta R^{-\gamma}_j$. Then  one can take $\psi_n(x)=\psi(MRz_n\xi)$ with $z_n$
that satisfy the following conditions:
\begin{equation*}
 z_{n+1}> z_n+1\quad \text{and}\quad \langle V\rangle(MRz_n)\geqslant \theta(MRz_n)^{-\gamma}.
\end{equation*}
\end{remark}

\medskip

In the case of potentials having heavy tails we introduce an additional characteristic of the kernel $a$:
\begin{equation}\label{char_a_ht}
  \ell_{\hat a}(r)=\int\limits_{\mathbb R^d}(a_{\max}-\hat a(r\xi))e^{-\xi^2}d\xi.
\end{equation}
\begin{theorem}\label{thm_heavy_tail}
  Let  $\langle V\rangle (R)$ admit the lower bound $\langle V\rangle (R)\geqslant R^{-\frac d4}$ for all sufficiently large
$R$, and assume that  $\mu_1=a_{\max}>0$,   $a_{\max} = \hat a(0)$ and
\begin{equation}\label{a_V_ineq}
\frac{ \ell_{\hat a}(R^{-1})}{\langle V\rangle (R)} \to 0\quad \text{as}\quad R \to\infty.
\end{equation}
Then the operator $\cL$ has infinitely many eigenvalues in $(\mu_1, a_{\max}+V_{\max}]$, with the accumulation point $\mu_1$.
\end{theorem}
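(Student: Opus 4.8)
The plan is to reproduce the variational scheme of the proof of Theorem~\ref{th_enforces}: I will produce a countable family $\{\varphi_n\}$ on whose linear span the quadratic form of $\cL-\mu_1$ is positive definite. Since $\mu_1=a_{\max}$ is the top of $\sigma_{\rm ess}(\cL)=[a_{\rm min},a_{\max}]\cup\mathcal{S}_V$ (Theorem~\ref{T1}), positive definiteness on an $N$-dimensional subspace yields, by the min-max principle, at least $N$ eigenvalues of $\cL$ above $\mu_1$; letting $N\to\infty$ gives infinitely many. Being confined to the bounded interval $(\mu_1,a_{\max}+V_{\max}]$ and unable to accumulate in the resolvent set, they must accumulate at $\mu_1$. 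The novelty is the test function dictated by the Gaussian weight in \eqref{char_a_ht}: I take $\psi(x)=\pi^{-d/4}e^{-|x|^2/2}$ and $\psi_R(x)=R^{-d/2}\psi(x/R)$, so that $|\hat\psi_R(\xi)|^2=R^d|\hat\psi(R\xi)|^2$ with $|\hat\psi(\eta)|^2=\pi^{-d/2}e^{-\eta^2}$. The substitution $\eta=R\xi$ then makes the convolution part of the form exact,
\[
\big((\hat a-a_{\max})\hat\psi_R,\hat\psi_R\big)=-\pi^{-d/2}\int_{\mathbb R^d}\big(a_{\max}-\hat a(\eta/R)\big)e^{-\eta^2}\,d\eta=-\pi^{-d/2}\ell_{\hat a}(R^{-1}),
\]
which is precisely the reason for introducing the characteristic \eqref{char_a_ht}.

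The main difficulty is the lower bound for the potential part $(V\psi_R,\psi_R)=\pi^{-d/2}R^{-d}\int_{\mathbb R^d}V(x)e^{-|x|^2/R^2}\,dx$. Unlike in Theorem~\ref{th_enforces}, the Gaussian is not supported in a single annulus, so I cannot simply replace $V$ by $\langle V\rangle$; the regions where $V$ is negative must be prevented from cancelling the positive contribution at the scale $|x|\sim R$. When $V\ge0$ this is immediate: restricting the integral to $G_R$, where $e^{-|x|^2/R^2}\ge e^{-4}$, gives $(V\psi_R,\psi_R)\ge\pi^{-d/2}e^{-4}\,\mathrm{meas}(G_1)\langle V\rangle(R)$. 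For sign-changing $V$ I would pass to $\Phi(\rho)=\int_{|x|\le\rho}V\,dx$ and integrate by parts radially, $\int V e^{-|x|^2/R^2}\,dx=\frac{2}{R^2}\int_0^\infty\Phi(\rho)\,\rho\,e^{-\rho^2/R^2}\,d\rho$; summing the annular bounds $\Phi(2R')-\Phi(R')=\int_{G_{R'}}V\ge\mathrm{meas}(G_1)(R')^{d}\langle V\rangle(R')\ge\mathrm{meas}(G_1)(R')^{3d/4}$ along a dyadic chain shows that $\Phi$ is eventually positive, which is exactly what the hypothesis $\langle V\rangle(R)\ge R^{-d/4}$ buys and what excludes a large negative inner contribution. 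The target in every case is an estimate of the form $(V\psi_R,\psi_R)\ge c\,\langle V\rangle(R)$ for large $R$, and I regard establishing this sign-indefinite potential bound, together with the sharp interplay of the two hypotheses it requires, as the heart of the proof.

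Granting it, \eqref{a_V_ineq} gives $\pi^{-d/2}\ell_{\hat a}(R^{-1})=o(\langle V\rangle(R))$, whence $\big((\cL-\mu_1)\psi_R,\psi_R\big)\ge c\langle V\rangle(R)-\pi^{-d/2}\ell_{\hat a}(R^{-1})\ge\tfrac12 c\langle V\rangle(R)>0$ for all large $R$. To upgrade this to positive definiteness on an infinite-dimensional space I set $\varphi_n=\psi_{R_n}$ with a sequence $R_n\to\infty$ chosen inductively, and write $B_{nm}=\big((\cL-\mu_1)\varphi_n,\varphi_m\big)$. Here the off-diagonal entries are estimated directly, not by disjointness of supports. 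Multiplying the two Gaussians gives $\hat\varphi_n\hat\varphi_m=(R_nR_m)^{d/2}\pi^{-d/2}e^{-(R_n^2+R_m^2)|\xi|^2/2}$, and the same change of variables bounds the convolution cross term by a quantity of order $(R_n/R_m)^{d/2}$ times a value of $\ell_{\hat a}$ at the finer scale ($n<m$); the potential cross term satisfies an analogous overlap bound $|(V\varphi_n,\varphi_m)|\lesssim (R_n/R_m)^{d/2}\langle V\rangle(R_n)$. Both thus carry the scale-separation factor $(R_n/R_m)^{d/2}$, so that $\sum_{m>n}|B_{nm}|$ is small once the ratios $R_{n+1}/R_n$ are large, while $\sum_{m<n}|B_{nm}|=R_n^{-d/2}\sum_{m<n}R_m^{d/2}(\cdots)$ is a fixed number to be compared against $B_{nn}\ge\tfrac12 c\langle V\rangle(R_n)$. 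Because $\langle V\rangle(R_n)\ge R_n^{-d/4}$ forces $R_n^{d/2}\langle V\rangle(R_n)\ge R_n^{d/4}\to\infty$ — this is where the exponent $d/4<d/2$ enters — choosing each $R_n$ large enough makes $\sum_{m\ne n}|B_{nm}|<B_{nn}$ for every $n$. Strict diagonal dominance yields positive definiteness of the form on $\mathrm{span}\{\varphi_n\}$, and the theorem follows as in Theorem~\ref{th_enforces}.
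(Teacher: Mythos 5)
Your scheme coincides with the paper's own proof. The published argument for Theorem~\ref{thm_heavy_tail} uses exactly these Gaussian test functions $\varphi_{R_j}(x)=R_j^{-d/2}e^{-x^2/R_j^2}$, identifies the diagonal convolution terms with $\ell_{\hat a}(R_j^{-1})$ (as an upper bound; your normalization turns it into an identity, which is a nice touch), bounds both the potential and the convolution cross terms by Gaussian-overlap factors of the form $(R_j/R_k)^{d/2}$, takes rapidly growing scales ($R_{j+1}=R_j^4$ there, inductively chosen ratios in your version), and uses the hypothesis $\langle V\rangle(R)\ge R^{-d/4}$ exactly as you do: to convert the overlap bounds into bounds by $\theta_{jj}^{1/2}\theta_{kk}^{1/2}$, i.e.\ to make the matrix of the quadratic form diagonally dominant, after which positive definiteness on the span and the min--max principle finish the proof. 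One small slip in your cross-term estimate: the Gaussian overlap gives $|(V\varphi_n,\varphi_m)|\lesssim \|V\|_{L_\infty(\mathbb R^d)}(R_n/R_m)^{d/2}$, with no access to the factor $\langle V\rangle(R_n)$ you wrote; this is harmless, since your inductive choice of scales never uses the stronger form.

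The genuine gap is the step you flag and then grant: the diagonal bound $(V\psi_R,\psi_R)\ge c\,\langle V\rangle(R)$ for sign-changing $V$. Your fallback (radial integration by parts plus dyadic chaining of the window bounds) yields $\Phi(\rho)\ge c\rho^{3d/4}-K$ and hence only $(V\psi_R,\psi_R)\ge c\,R^{-d/4}$, and this weaker bound does not suffice to close the argument: hypothesis \eqref{a_V_ineq} controls $\ell_{\hat a}(R^{-1})$ only relative to $\langle V\rangle(R)$, which may decay far more slowly than $R^{-d/4}$ (for instance like $1/\log R$), so $\ell_{\hat a}(R^{-1})=o(\langle V\rangle(R))$ is compatible with $\ell_{\hat a}(R^{-1})\gg R^{-d/4}$ and positivity of the diagonal entries does not follow. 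You should know, however, that the paper is in precisely the same position: its inequality \eqref{gau_bound1} is exactly the bound you grant, is introduced by ``Observe that'' with no justification, and is immediate only when $V\ge 0$ (the population-dynamics setting $0\le V\le 1$); the analogous issue occurs in estimate \eqref{V} in the proof of Theorem~\ref{th_enforces}, where a possibly negative contribution from the rest of $\supp\psi_R$ is silently dropped. So your proposal reproduces the published proof faithfully, including its one soft spot; a fully rigorous treatment of the sign-indefinite case would require either an added positivity-type assumption on $V$ or a real proof of the granted estimate.
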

\begin{proof}
  We consider a sequence of Gaussian test functions $\varphi_{R_j}(x)=R_j^{-\frac d2}e^{-\frac{x^2}{R^2_j}}$,
  where the scaling factors $R_j>0$, $j=1,2,\ldots$ will be chosen later on.
  Observe that
\begin{equation}\label{gau_bound1}
  \int\limits_{\mathbb R^d}  V(x)(\varphi_R(x))^2\,dx=\int\limits_{\mathbb R^d}V(x)R^{-d}e^{-\frac{2x^2}{R^2}}\,dx\geqslant
  C_1\, \langle V\rangle\!(R),
\end{equation}
where the constant $C_->0$ does not depend on $R$. Since $V(\cdot)$ is bounded, for any integer $j>1$ we also have
\begin{equation}\label{gau_bound2}
  \int\limits_{\mathbb R^d}  V(x)(\varphi_R(x))\varphi_{R^j}(x)\,dx\leqslant C \int\limits_{\mathbb R^d}\varphi_R(x)\varphi_{R^j}(x)\,dx \leqslant C R^{-\frac{d(j-1)}2};
\end{equation}
hereinafter $C$  is a positive constant independent of $R$, but it can change its value from one formula to another.
Denote
  $$
  \theta_{kj}:=\int\limits_{\mathbb R^d}V(x)\varphi_{R_k}(x) \varphi_{R_j}(x)\,dx.
  $$
 Letting $R_{j+1}=R^4_j$, j=1,2,\ldots, by \eqref{gau_bound1}-\eqref{gau_bound2} and taking into account the lower bound $\langle V\rangle (R)\geqslant R^{-\frac d4}$, for $k>j$ we obtain
$$
(\theta_{kj})^2\leqslant CR_j^{- (2^{2(k-j)}-1)d}\leqslant R_j^{-\frac d4}R_k^{-\frac d4} R_j^{-\frac d42^{2(k-j)}}
\leqslant C R_j^{-\frac d42^{2(k-j)}}\theta_{jj}\theta_{kk}.
$$
 Choosing sufficiently large $R_1$
 we conclude that for any function $\varphi\in L^2(\mathbb R^d)$ such that
 $\varphi=\sum\limits_{j=1}^N \kappa_j\varphi_{R_j}$ the following inequality holds:
 \begin{equation}\label{est_v_solo}
   \int\limits_{\mathbb R^d} V(x)\varphi^2(x)\,dx\geqslant \frac12\sum\limits_{j=1}^N\kappa_j^2\int\limits_{\mathbb R^d} V(x)
   (\varphi_{R_j}(x))^2\,dx\geqslant \frac{C_-}2\sum\limits_{j=1}^N\kappa_j^2 \langle V\rangle(R_j).
 \end{equation}
 Since the Fourier transform of $\varphi_{R_j}$ is $\hat\varphi_{R_j}(\xi)=c_d R_j^{\frac d2}
 e^{-R_j^2\xi^2}$ with $c_d=(2\pi)^{-\frac d2}$, the quantities   $\big((a_{\max}-\hat a)\hat\varphi_{R_j},
 \hat\varphi_{R_j}\big)_{L^2(\mathbb R^d)}$ and $\big((a_{\max}-\hat a)\hat\varphi_{R_j},
 \hat\varphi_{R_k}\big)_{L^2(\mathbb R^d)}$ with $k>j$ can be estimated as follows:
 \begin{align*}
&
c_d^2 \int\limits_{\mathbb R^d}(a_{\max}-\hat a(\xi))R^d_j e^{-2R_j^2\xi^2}d\xi=
c_d^2 \int\limits_{\mathbb R^d}\left(a_{\max}-\hat a\left(\frac\xi{R_j}\right)\right) e^{-2\xi^2}d\xi\leqslant c_d^2\ell_a({R_j^{-1}}),
\\
&c_d^2 \int\limits_{\mathbb R^d}(a_{\max}-\hat a(\xi))R^{\frac d2}_j R^{\frac d2}_k
e^{-R_j^2\xi^2}e^{-R_k^2\xi^2}d\xi=
c_d^2 \Big(\frac{R_j}{R_k}\Big)^{\frac d2}\int\limits_{\mathbb R^d}
\Big(a_{\max}-\hat a\left(\frac\xi{R_k}\right)\Big) e^{-\xi^2}
e^{-\frac{R_j^2\xi^2}{R_k^2}}d\xi
\\
&\hphantom{c_d^2 \int\limits_{\mathbb R^d}(a_{\max}-\hat a(\xi))R^{\frac d2}_j R^{\frac d2}_k
e^{-R_j^2\xi^2}e^{-R_k^2\xi^2}d\xi}
\leqslant c_d^2R_j^{-(2^{2(k-j)}-1)\frac d2}\ell_a({R_k^{-1}}).
 \end{align*}
 In view of \eqref{a_V_ineq} for sufficiently large $R_ 1$ this implies the estimate
 \begin{align*}
 \big((a_{\max}-\hat a)\hat\varphi, \hat\varphi\big)_{L^2(\mathbb R^d)}\leqslant &
 \frac{C_-}{20}\sum\limits_{j=1}^N\kappa_j^2 \langle V\rangle(R_j)+ \frac{C_-}{20}\sum\limits_{j=1}^N
 \sum\limits_{k=j+1}^N\kappa_j\kappa_k R_j^{(2^{2(k-j)}-1)\frac d2} \langle V\rangle(R_k)
 \\
 \leqslant & \frac{C_-}{20}\sum\limits_{j=1}^N\kappa_j^2 \langle V\rangle(R_j)
 \\
 &+ \frac{C_-}{20}\sum\limits_{j=1}^N
 \sum\limits_{k=j+1}^N\kappa_j\kappa_k R_j^{(2^{2(k-j)}-\frac32)\frac d2} \big(\langle V\rangle(R_k)\big)^{\frac12}
 \big(\langle V\rangle(R_j)\big)^{\frac12}
 \\
\leqslant &  \frac{C_-}{10}\sum\limits_{j=1}^N\kappa_j^2 \langle V\rangle(R_j);
 \end{align*}
 here we have also used the inequality $\langle V\rangle (R)\geqslant R^{-\frac d4}$.
 Combining this estimate with \eqref{est_v_solo} we obtain
 $$
 \big((\mathcal{L}-\mu_1)\varphi,\varphi\big)\geqslant  \frac{C_-}{4}\sum\limits_{j=1}^N\kappa_j^2 \langle V\rangle(R_j)\geqslant c_N\|\varphi\|^2_{L^2(\mathbb R^d)}, \quad c_N>0.
 $$
 This yields the desired statement.
\end{proof}

We turn to the case  $\mu_1=V_{\max}>0$. In this case the statements similar to those of Theorems
\ref{T2}--\ref{thm_heavy_tail} remain valid, if we exchange the roles of $V$ and $\hat a$.
For the reader convenience we formulate here the counterpart of Theorem \ref{T2}.

\begin{theorem}\label{T3}
Let $\mu_1=V_{\max} = V(0) >0$, $\ 0< a_{\max}\leqslant V_{\max}$, and assume that

\noindent 1) there exists a small enough $\vartheta>0$ such that
$$
V(x) \ \ge \ V_{\max} - c | x |^{\gamma}  \quad \mbox{for all} \quad |x| < \vartheta,
$$

\noindent 2) there exists a large enough $q>0$ such that
$$
\hat a(\xi)\ \ge \ C |\xi|^{-\alpha}  \quad \mbox{for all} \quad |\xi| \ge q,
$$ where  $c,\, C$ are constants.

\noindent If $\gamma > \alpha$, then the operator $\cL$ has infinitely many eigenvalues in $(\mu_1, \, \mu_1+ a_{\max}]$, with the accumulation point $\mu_1$.
\end{theorem}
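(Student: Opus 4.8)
Theorem \ref{T3} is the "dual" of Theorem \ref{T2}: the roles of the potential $V$ and the Fourier multiplier $\hat a$ are interchanged. In Theorem \ref{T2} we had a multiplier $\hat a$ with a maximum at the origin in $\xi$-space together with a slowly decaying potential $V$ in $x$-space; now we have a potential $V$ with a maximum at $x=0$ together with a slowly decaying multiplier $\hat a$ at infinity in $\xi$-space. The key structural observation is that the Fourier transform $\mathcal F$ conjugates multiplication operators into convolution operators and vice versa: the operator of multiplication by $V$ becomes convolution against $\hat V$, and the convolution against $a$ becomes multiplication by $\hat a$. Under this transform the quadratic form of $\cL-\mu_1=\cL-V_{\max}$ takes exactly the same shape as the one analyzed in Theorem \ref{th_enforces}, with $V$ and $\hat a$ swapped and the variable $x$ traded for $\xi$.

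The plan is therefore to reduce to Theorem \ref{T2} by applying the Fourier transform. Writing $\psi_R(\xi)=R^{-d/2}\psi(\xi/R)$ for a bump $\psi$ supported in an annulus (as in the proof of Theorem \ref{th_enforces}), I would test the quadratic form on the sequence $\Phi_R:=\mathcal F^{-1}\psi_R$, so that in physical space the profile is concentrated at high frequencies $|\xi|\sim R$ where $\hat a$ is still bounded below by $C|\xi|^{-\alpha}$. The two terms of the quadratic form become

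First, the multiplier part contributes $\big((\hat a)\,\widehat{\Phi_R},\widehat{\Phi_R}\big)=\int \hat a(\xi)\,|\psi_R(\xi)|^2\,d\xi\ge C\,R^{-\alpha}$ (up to a constant $\mathrm{meas}(G_1)$ factor), exactly mirroring the estimate \eqref{V} but with $R^{-\gamma}$ replaced by $R^{-\alpha}$. Second, the potential part contributes $\big((V-V_{\max})\Phi_R,\Phi_R\big)$, which after returning to $x$-space concentrates near $x=0$ where condition 1) gives $V(x)-V_{\max}\ge -c|x|^{\gamma}$; repeating the dyadic summation argument of \eqref{A_bis0}--\eqref{A} (now in $x$ rather than $\xi$, using that $\Phi_R=\mathcal F^{-1}\psi_R$ is itself Schwartz because $\psi$ is smooth and compactly supported) yields a lower bound $-C_5\,R^{-\gamma}$. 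Since $\gamma>\alpha$, the positive $R^{-\alpha}$ term dominates for large $R$, so $\big((\cL-\mu_1)\Phi_R,\Phi_R\big)>0$.

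Finally I would establish the almost-orthogonality on a countable family $\Phi_{R_n}$ with $R_n$ growing geometrically, exactly as in the last part of the proof of Theorem \ref{th_enforces}: the diagonal terms are positive and bounded below by $c\,R_n^{-\alpha}$, while the off-diagonal terms decay faster (now the near-orthogonality comes from the fact that the frequency supports of $\psi_{R_n}$ and $\psi_{R_m}$ are disjoint annuli for $n\ne m$, so the multiplier cross terms vanish outright when $a$ has suitable structure, and the potential cross terms are controlled by the decay of $\Phi_{R_n}$). Applying the variational (min-max) criterion on the span of $\{\Phi_{R_n}\}$ then produces infinitely many eigenvalues above $\mu_1$ accumulating at $\mu_1$. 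The main obstacle I anticipate is handling the off-diagonal potential terms: unlike in Theorem \ref{th_enforces}, where disjoint supports of $a\ast\varphi_n$ and $\varphi_m$ made the convolution cross terms vanish, here the potential $V$ need not have compact support, so one must instead exploit the rapid spatial decay of the Schwartz functions $\Phi_{R_n}$ to make $\big(V\Phi_{R_n},\Phi_{R_m}\big)$ negligibly small, choosing the geometric ratio of the $R_n$ large enough to beat the constants, precisely paralleling the $M>2$ device used at the end of the earlier proof.
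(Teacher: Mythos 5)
Your overall strategy coincides with the paper's: the published proof of Theorem \ref{T3} is exactly the Fourier--dual argument, testing the form on functions whose Fourier transforms $\hat\psi_R(\xi)=R^{-d/2}\hat\psi(\xi/R)$ are bumps on the annuli $\{qR<|\xi|<2qR\}$, deriving the two diagonal bounds $(\hat a\,\hat\psi_R,\hat\psi_R)\ge C\,R^{-\alpha}$ and $\big((V-V_{\max})\psi_R,\psi_R\big)\ge -c\,R^{-\gamma}$, and then running the almost-orthogonality scheme of Theorem \ref{th_enforces}. Your diagonal estimates are correct and agree with the paper's. (A small remark: the multiplier cross terms $(\hat a\,\hat\psi_{R_n},\hat\psi_{R_m})$ vanish unconditionally once the frequency annuli are disjoint; no ``suitable structure'' of $a$ is needed there.)

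There is, however, a genuine gap in your treatment of the off-diagonal potential terms, and it is the one step where the duality is not mechanical. You propose to make $(V\Phi_{R_n},\Phi_{R_m})$ negligible by ``exploiting the rapid spatial decay of the Schwartz functions $\Phi_{R_n}$'' together with a large geometric ratio. This cannot work as stated: unlike in Theorem \ref{th_enforces}, where the $\varphi_n$ occupied \emph{disjoint} annuli in $x$-space, here every $\Phi_{R_n}(x)=R_n^{d/2}\Psi(R_nx)$ (with $\Psi=\mathcal F^{-1}\psi$, $\Psi(0)\neq0$) concentrates at the \emph{same} point $x=0$, so their overlap is $\int|\Phi_{R_n}||\Phi_{R_m}|\,dx\asymp (R_n/R_m)^{d/2}$. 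Bounding the cross term merely by $\|V\|_{L_\infty}(R_n/R_m)^{d/2}$ and comparing with the diagonal terms $d_n\asymp R_n^{-\alpha}$ (take $\hat a(\xi)\asymp|\xi|^{-\alpha}$ at infinity), the ratio $|o_{nm}|/\sqrt{d_nd_m}\lesssim R_n^{(d+\alpha)/2}R_m^{-(d-\alpha)/2}$ for neighbouring indices $m=n+1$ of a geometric sequence $R_n=\rho^n$ is of order $\rho^{\,n\alpha-(d-\alpha)/2}\to\infty$, whatever $\rho$ is; and if $\alpha\ge d$ no choice of growth of $R_n$ helps at all. The decay of the tails is simply not the mechanism that decouples these functions. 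What closes the argument --- and is what the paper's phrase ``the rest of the proof follows the line of the proof of Theorem \ref{T2}'' refers to --- is condition 1), the flatness $0\le V_{\max}-V(x)\le c|x|^\gamma$ near $x=0$, used through the Cauchy--Schwarz inequality for the \emph{nonnegative} form $V_{\max}-V$: since $(\Phi_{R_n},\Phi_{R_m})=0$, one has $(V\Phi_{R_n},\Phi_{R_m})=\big((V-V_{\max})\Phi_{R_n},\Phi_{R_m}\big)$, and
\begin{equation*}
\big|\big((V_{\max}-V)\Phi_{R_n},\Phi_{R_m}\big)\big|
\le \big((V_{\max}-V)\Phi_{R_n},\Phi_{R_n}\big)^{\frac12}\big((V_{\max}-V)\Phi_{R_m},\Phi_{R_m}\big)^{\frac12}
\le C (R_nR_m)^{-\frac\gamma2},
\end{equation*}
the last bound coming from condition 1) on $|x|<\vartheta$ plus the Schwartz tails on $|x|\ge\vartheta$. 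Against $\sqrt{d_nd_m}\ge c(R_nR_m)^{-\alpha/2}$ this gives a ratio $\lesssim (R_nR_m)^{(\alpha-\gamma)/2}$, which decays geometrically in $n+m$ because $\gamma>\alpha$, and the $M>2$ device then applies verbatim. In short: the missing idea is the exact dual of the paper's Cauchy--Schwarz step for $a_{\max}-\hat a$, applied here to $V_{\max}-V$; rapid decay of $\Psi$ enters only to control the region $|x|\ge\vartheta$ where condition 1) is unavailable.
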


\begin{proof}
The proof of of Theorem \ref{T3} is analogous to that of Theorem \ref{T2}. We consider a family of test functions
$$
\hat \psi_R (\xi) = R^{-d/2} \hat \psi \left(\frac{\xi}{R}\right) \qquad \mbox{with} \qquad  \supp \hat \psi \subset \{ q<|\xi|< 2q\}.
$$
In the same way as  inequalities  \eqref{V}  and \eqref{A} were derived we obtain the estimates
$$
 \big( \hat a \hat \psi_R, \hat \psi_R \big) \ge C\int\limits_{\mathds{R}^d} |\xi|^{-\alpha} \hat \psi^2 \left(\frac{\xi}{R}\right)\,  d \frac{\xi}{R} \ge  R^{- \alpha} \, \big( C |\xi|^{-\alpha} \hat \psi(\xi), \hat \psi(\xi) \big),
$$
and 
$$
\big( (V- V_{\max})\psi_R, \psi_R \big) \ge - R^{-\gamma} c \int\limits_{\mathds{R}^d}  |\psi(y)|^2 |y|^\gamma d y.
$$
The rest of the proof follows the line of the proof of Theorem \ref{T2}.
\end{proof}

\section{Discrete spectrum in spectral gaps}\label{ss_gap}

In this section we construct convolution operators with a potential that have a non-empty discrete spectrum in spectral gaps.
Consider an operator $\cL$ defined in \eqref{L} with  $V=V^0+V^1$,  and assume that the functions  $a(\cdot)$
and $V^0(\cdot)$ satisfy all the conditions of Theorem \ref{th_enforces} or \ref{thm_heavy_tail}.  Assume moreover that $V^1$ is a bounded function
with a compact support. Letting $\tilde{\mathcal{S}}^1$ be the essential range of $V^1+V^0 \mathbf{1}_{\supp V^1}$ we denote
$\mathcal{S}^1=\tilde{\mathcal{S}}^1\setminus\{0\}$, \  $\theta_-=\inf ({\mathcal{S}}^1)$ and
$V^1_{\max}=\mathrm{esssup} (V^1)$. In the sequel we suppose $\theta_->a_{\max}$.
Observe that in this case $a_{\max}$ and $\theta_-$ belong to the essential spectrum of $\cL$,
and $(a_{\max},\theta_-)$ is a gap in the essential spectrum.

Under the above formulated assumptions,  
in the set $\{\lambda\in\mathbb R\,:\, \lambda>a_{\max}\}$ there is a countable sequence of eigenvalues of the operator $\cL^0 u:=a\ast u+V^0u$ that converges to $a_{\max}$. We denote by $\lambda_0$ the largest
of them and by $u_0$ the corresponding eigenfunction.

\begin{theorem}\label{T4}
  Let the functions $a(\cdot)$ and $V^0(\cdot)$ satisfy all the conditions of Theorem \ref{th_enforces} or \ref{thm_heavy_tail}, and assume that
  $\theta_->\lambda_0$. Then there exists $\varkappa_0>0$, $\varkappa_0=\varkappa_0(a_{\max}, V_{\max}^1, \theta_-, \lambda_0, u_0(\cdot))$,
  such that if $|\supp V^1|<\varkappa_0$, then the operator $\cL$ has an eigenvalue in
  the interval $(a_{\max},\theta_-)$.
\end{theorem}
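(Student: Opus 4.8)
The plan is to argue by contradiction, producing the eigenvalue by comparing $\cL$ with the unperturbed operator $\cL^0$ through the known eigenpair $(\lambda_0,u_0)$. By Theorem~\ref{T1} the interval $(a_{\max},\theta_-)$ contains no essential spectrum of $\cL$, so it suffices to show $\sigma(\cL)\cap(a_{\max},\theta_-)\neq\varnothing$. Suppose the contrary. Then $\cL$ has no spectrum whatsoever in $(a_{\max},\theta_-)$, hence the spectral projection $Q:=E_\cL\big((a_{\max},+\infty)\big)$ coincides with $E_\cL\big([\theta_-,+\infty)\big)$, so that $\cL\geqslant\theta_-$ on $\operatorname{Ran}Q$; on $\operatorname{Ran}P$, $P:=I-Q$, we have only the global bound $\cL\geqslant m$, $m:=a_{\rm min}+V_{\rm min}$, from the quadratic form estimate in Theorem~\ref{T1}.

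First I would use $u_0$ as a trial vector. From $\cL^0u_0=\lambda_0u_0$ one gets $(\cL-\lambda_0)u_0=V^1u_0$, whence $(\cL u_0,u_0)=\lambda_0\|u_0\|^2+(V^1u_0,u_0)$. The smallness input is that, for the \emph{fixed} function $u_0\in L_2(\mathbb{R}^d)$, the quantity $\eta^2:=\int_{\supp V^1}|u_0|^2\,dx$ tends to $0$ as $|\supp V^1|\to0$ by absolute continuity of the Lebesgue integral; this is exactly where the dependence of $\varkappa_0$ on $u_0(\cdot)$ enters. Consequently $|(V^1u_0,u_0)|\leqslant\|V^1\|_{L_\infty}\eta^2$ and $\|V^1u_0\|\leqslant\|V^1\|_{L_\infty}\eta$ are both small, and no $L_\infty$ bound on $u_0$ is needed.

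The key step, which makes an \emph{internal} gap tractable, is to estimate how much spectral mass of $u_0$ leaks below the gap. Since $P$ commutes with $\cL$ and $\cL-\lambda_0\leqslant a_{\max}-\lambda_0<0$ on $\operatorname{Ran}P$, we obtain $(\lambda_0-a_{\max})\|Pu_0\|\leqslant\|P(\cL-\lambda_0)u_0\|=\|PV^1u_0\|\leqslant\|V^1u_0\|$, so $\|Pu_0\|\leqslant\|V^1\|_{L_\infty}\eta/(\lambda_0-a_{\max})$ is also small. Writing $(\cL u_0,u_0)=(\cL Qu_0,Qu_0)+(\cL Pu_0,Pu_0)\geqslant\theta_-\|u_0\|^2-(\theta_--m)\|Pu_0\|^2$ and comparing with $(\cL u_0,u_0)\leqslant\lambda_0\|u_0\|^2+\|V^1\|_{L_\infty}\eta^2$ yields
$$(\theta_--\lambda_0)\|u_0\|^2\ \leqslant\ \|V^1\|_{L_\infty}\,\eta^2+(\theta_--m)\,\frac{\|V^1\|_{L_\infty}^2\,\eta^2}{(\lambda_0-a_{\max})^2}.$$
The left-hand side is a fixed positive number (as $\theta_->\lambda_0$), while the right-hand side tends to $0$ as $|\supp V^1|\to0$. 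Choosing $\varkappa_0$ so small that the right-hand side is strictly less than the left whenever $|\supp V^1|<\varkappa_0$ gives the contradiction and simultaneously fixes the dependence $\varkappa_0=\varkappa_0(a_{\max},V^1_{\max},\theta_-,\lambda_0,u_0(\cdot))$.

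The main obstacle is conceptual rather than computational: because the gap is internal, with essential spectrum on both sides, a plain min--max/trial-function bound would only detect spectrum somewhere below $\theta_-$ and could not by itself place it inside $(a_{\max},\theta_-)$. The device that resolves this is the leakage estimate $\|Pu_0\|=O(\eta)$, guaranteeing that almost all of the spectral mass of $u_0$ lies above $a_{\max}$. In writing up the argument I would also double-check that $u_0\in L_2$ alone suffices for the uniform-integrability estimate on $\eta$, and note that the constants are naturally expressed through $\|V^1\|_{L_\infty}$ (which under the well-type sign convention $V^1\geqslant0$ reduces to $V^1_{\max}$).
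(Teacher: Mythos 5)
Your proof is correct, and it shares the paper's core mechanism: take the eigenpair $(\lambda_0,u_0)$ of $\cL^0$, observe that $(\cL-\lambda_0)u_0=V^1u_0$, and make the residual
$\|V^1u_0\|\leqslant\|V^1\|_{L_\infty}\bigl(\int_{\supp V^1}|u_0|^2\,dx\bigr)^{1/2}$
small via absolute continuity of the integral of the fixed $L_1$ function $|u_0|^2$ — this is exactly the paper's inequality \eqref{vish_integr}. Where you genuinely diverge is the final spectral step. The paper simply invokes the abstract lemma (citing \cite[Lemma 12]{ViLu}) that for a self-adjoint operator $\mathrm{dist}(\lambda_0,\sigma(\cL))\leqslant\|(\cL-\lambda_0)u_0\|/\|u_0\|=:\delta$; requiring $\delta<\min\{\lambda_0-a_{\max},\,\theta_--\lambda_0\}$ then places a spectral point in $(\lambda_0-\delta,\lambda_0+\delta)\subset(a_{\max},\theta_-)$, which must be a discrete eigenvalue since the interval is free of essential spectrum. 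You instead run a contradiction argument with the spectral projections $P,Q$, the leakage bound $\|Pu_0\|\leqslant\|V^1u_0\|/(\lambda_0-a_{\max})$, and a quadratic-form comparison. This buys self-containedness (no external lemma) at the price of length and of weaker localization — you only get spectrum \emph{somewhere} in the gap, not within $\delta$ of $\lambda_0$; in effect your projection argument re-proves a weaker form of the cited lemma, whose one-line version is $\|u_0\|=\|(\cL-\lambda_0)^{-1}V^1u_0\|\leqslant\|V^1u_0\|/\mathrm{dist}(\lambda_0,\sigma(\cL))$ whenever $\lambda_0\notin\sigma(\cL)$. Two minor points: (i) your $\varkappa_0$ additionally depends on the global lower bound $m=a_{\rm min}+V_{\rm min}$, which is not in the theorem's stated list of dependencies — harmless, since any fixed multiple of $\|Pu_0\|^2=O(\eta^2)$ still vanishes, but worth noting; (ii) as you observe, both your argument and the paper's actually use $\|V^1\|_{L_\infty}$ rather than $V^1_{\max}$, so this cosmetic discrepancy with the statement is shared with the original proof.
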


\begin{proof}
  We first calculate the $L_2$ norm of the function $(\cL u_0-\lambda_0 u_0)$.
\begin{align*}
  \|\cL u_0-\lambda_0 u_0\|_{L_2(\mathbb R^d)}\leqslant & \|\cL^0 u_0-\lambda_0 u_0\|_{L_2(\mathbb R^d)}
  +\| V^1 u_0\|_{L_2(\mathbb R^d)}=\| V^1 u_0\|_{L_2(\mathbb R^d)}
  \\
  \leqslant & \|V^1\|_{L_\infty}\Bigg(\,\int\limits_{\supp V^1}u^2_0(x)\,dx\Bigg)^\frac12.
\end{align*}
Since $u_0^2(\cdot)$ is integrable, there exists $\varkappa_0>0$ such that
\begin{equation}\label{vish_integr}
\Bigg(\,\int\limits_{\supp V^1}u_0^2(x)\,dx\Bigg)^\frac12 < \big(\|V^1\|_{L_\infty}\big)^{-1}\min\{\lambda_0-a_{\max},\,\theta_--\lambda_0\},
\end{equation}
if $|\supp V^1|\leqslant \varkappa_0$. For such $V^1$ we have
$$
\delta:=\|\cL u_0-\lambda_0 u_0\|_{L_2(\mathbb R^d)}\leqslant \min\{\lambda_0-a_{\max},\,\theta_--\lambda_0).
$$
By \cite[Lemma 12]{ViLu} we conclude that  there exists a point of the discrete spectrum of $\cL$ in the interval
$(\lambda_0-\delta,\lambda_0+\delta)$. Since $(\lambda_0-\delta,\lambda_0+\delta)\subset (a_{\max},\theta_-)$,
this implies the desired statement.
\end{proof}

\begin{remark}
{\rm   In a similar way, taking sufficiently small $\varkappa_0$, one can show that for any finite collection of distinct eigenvalues of the operator $\cL^0$ there exists an eigenvalue of the operator $\cL$ in a small neighbourhood of each element of this collection. }
\end{remark}

\section*{Acknowledgments}

D.I.B. is partially supported by the Czech Science Foundation within the project 22-18739S.
The work of the second and third authors has been partially supported by the Troms{\o} Research Foundation, project "Pure
Mathematics in Norway" and UiT Aurora project MASCOT.

\end{document}